\numberwithin{equation}{section}
\newcommand{\HH}{\mathbb{H}} 
\newcommand{\I}{\mathbb{I}} 
\newcommand{\E}{\mathbb{E}} 
\renewcommand{\P}{\mathbb{P}} 
\newcommand{\X}{\mathbf{X}} 
\newcommand{\Y}{\mathbf{Y}} 
\newcommand{\y}{\mathbf{y}} 
\newcommand{\n}{\mathbf{n}}
\theoremstyle{plain}
\newtheorem{theorem}{Theorem}[section]
\newtheorem{proposition}[theorem]{Proposition}
\newtheorem{lemma}[theorem]{Lemma}
\newtheorem{corollary}[theorem]{Corollary}
\newtheorem{conjecture}[theorem]{Conjecture}
\theoremstyle{definition}
\newtheorem{remark}[theorem]{Remark}
\newcommand\R{\mathbb{R}}
\newcommand\Z{\mathbb{Z}}
\newcommand\N{\mathbb{N}}
\newcommand\C{\mathbb{C}}
\newcommand\eps{\varepsilon}
\begin{document}

\title[Chowla and Elliott conjectures]{The logarithmically averaged Chowla and Elliott conjectures for two-point correlations}

\author{Terence Tao}
\address{Department of Mathematics, UCLA\\
405 Hilgard Ave\\
Los Angeles CA 90095\\
USA}
\email{tao@math.ucla.edu}

\begin{abstract}  Let $\lambda$ denote the Liouville function.  The Chowla conjecture, in the two-point correlation case, asserts that
$$ \sum_{n \leq x} \lambda(a_1 n + b_1) \lambda(a_2 n+b_2) = o(x) $$
as $x \to \infty$, for any fixed natural numbers $a_1,a_2$ and non-negative integer $b_1,b_2$ with $a_1b_2-a_2b_1 \neq 0$.  In this paper we establish the logarithmically averaged version
$$ \sum_{x/\omega(x) < n \leq x} \frac{\lambda(a_1 n + b_1) \lambda(a_2 n+b_2)}{n} = o(\log \omega(x)) $$
of the Chowla conjecture as $x \to \infty$, where $1 \leq \omega(x) \leq x$ is an arbitrary function of $x$ that goes to infinity as $x \to \infty$, thus breaking the ``parity barrier'' for this problem.  Our main tools are the multiplicativity of the Liouville function at small primes, a recent result of Matom\"aki, Radziwi{\l}{\l}, and the author on the averages of modulated multiplicative functions in short intervals, concentration of measure inequalities, the Hardy-Littlewood circle method combined with a restriction theorem for the primes, and a novel ``entropy decrement argument''.  Most of these ingredients are also available (in principle, at least) for the higher order correlations, with the main missing ingredient being the need to control short sums of multiplicative functions modulated by local nilsequences.

Our arguments also extend to more general bounded multiplicative functions than the Liouville function $\lambda$, leading to a logarithmically averaged version of the Elliott conjecture in the two-point case.  In a subsequent paper we will use this version of the Elliott conjecture to affirmatively settle the Erd\H{o}s discrepancy problem.
\end{abstract}

\maketitle


\section{Introduction}

Let $\lambda$ denote the Liouville function, thus $\lambda$ is the completely multiplicative function such that $\lambda(p)=-1$ for all primes $p$.  We have the following well known conjecture of Chowla \cite{chowla}:

\begin{conjecture}[Chowla conjecture]\label{chow}  Let $k \geq 1$, let $a_1,\dots,a_k$ be natural numbers and let $b_1,\dots,b_k$ be distinct nonnegative integers such that $a_i b_j - a_j b_i \neq 0$ for $1 \leq i < j \leq k$.  Then
$$ \sum_{n \leq x} \lambda(a_1 n + b_1) \dots \lambda(a_k n + b_k) = o(x) $$
as $x \to \infty$.
\end{conjecture}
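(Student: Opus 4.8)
The plan is to exploit the multiplicativity of $\lambda$ at small primes together with recent cancellation estimates for multiplicative functions in short intervals, following the strategy announced in the abstract; I will describe it for the two-point case $k=2$, which is the realistic target, and then indicate why the full range of $k$ and the removal of logarithmic averaging remain out of reach.

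\emph{Step 1: pass to logarithmic averaging and use multiplicativity.} Since $\sum_{n\le x}1/n\asymp\log x$, it suffices as a first approximation to prove the weaker bound $\sum_{n\le x}\lambda(a_1n+b_1)\lambda(a_2n+b_2)/n=o(\log x)$, and logarithmic averaging is in any case essentially forced on us, because the measure $dn/n$ is approximately invariant under the dilations $n\mapsto pn$ that make multiplicativity usable. For each prime $p$ in a dyadic block $[P,2P]$ one inserts the near-unit weight $p\,\mathbf{1}_{p\mid n}$ (its average over $n$ is $1$ when $n$ equidistributes modulo $p$), which localises the sum to multiples of $p$, and then substitutes $n=pm$ to invoke $\lambda(pm)=-\lambda(m)$; the two-point logarithmic correlation is thereby turned into a correlation of $\lambda$ along a shifted progression. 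Averaging over $p$ and grouping $m$ into short intervals of length $H(x)\to\infty$ puts one in position to apply the Matom\"aki--Radziwi{\l}{\l}-type theorem of Matom\"aki, Radziwi{\l}{\l} and the author on modulated multiplicative averages $\sum_{x\le n\le x+H}g(n)e(\alpha n)$, which provides cancellation uniformly in the frequency $\alpha$ and for almost all $x$.

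\emph{Step 2: control the error by an entropy decrement argument.} Replacing $\mathbf{1}$ by the weight $p\,\mathbf{1}_{p\mid n}$ is only valid up to a discrepancy measuring the failure of $n$ to equidistribute modulo $p$, and this discrepancy need not be small for any individual $p$. The remedy is the \textbf{entropy decrement argument}: one tracks the Shannon entropy of the joint law of $(n\bmod p)$ as $p$ runs over longer and longer dyadic blocks, observes that this quantity is monotone and bounded and hence nearly constant outside a sparse set of scales, and concludes that for a positive proportion of scales the correlations at ``scale $p$'' and at ``scale $1$'' genuinely agree up to $o(1)$. Concentration-of-measure (Azuma/Hoeffding-type) inequalities handle the sums over $p$ that arise, and a Cauchy--Schwarz and van der Corput step reduces matters to bounding $\sum_n\lambda(n)e(\alpha n)$ over short intervals --- which the Hardy--Littlewood circle method, combined with the restriction theorem for the primes transferred to $\lambda$, dispatches: minor arcs by Vinogradov-type bilinear estimates, major arcs by the prime number theorem in progressions and the mean-zero property of $\lambda$.

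\emph{The main obstacle.} Carried out with care, this scheme yields the logarithmically averaged two-point case (and its Elliott analogue), but not Conjecture~\ref{chow} as stated, for two independent reasons. First, for $k\ge 3$ the exponential sum in Step~2 must be replaced by $\lambda$ modulated by an arbitrary \emph{local nilsequence}, and no short-interval cancellation estimate for such sums is presently known --- this is the need to ``control short sums of multiplicative functions modulated by local nilsequences'' flagged as the missing ingredient in the abstract, and it is where I expect the genuine difficulty to sit. Second, even for $k=2$ the entropy decrement argument delivers the conclusion only after logarithmic averaging, and a priori only along some sequence of scales tending to infinity; upgrading this to the natural-density bound $o(x)$ would require an essentially new mechanism for converting logarithmic cancellation into ordinary cancellation, with consequences well beyond Chowla. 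Accordingly I would present the logarithmically averaged two-point statement as the theorem actually established, and leave Conjecture~\ref{chow} as the motivating open problem.
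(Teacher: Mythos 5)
You are right not to offer a proof: the statement is Conjecture \ref{chow} itself, which the paper does not prove (it is quoted from Chowla as an open problem), and only the logarithmically averaged two-point version, Theorem \ref{lach} (via Theorem \ref{elliott}), is actually established. Your sketch of that weaker result follows essentially the paper's own route (multiplicativity at small primes under logarithmic averaging, an entropy decrement step, Hoeffding-type concentration, then the circle method with the restriction theorem for primes and the Matom\"aki--Radziwi{\l}{\l}--Tao short-interval input), and your assessment of why $k\geq 3$ and the removal of logarithmic averaging are out of reach matches Section \ref{remarks}. One detail to correct in your description: the entropy decrement argument of the paper tracks the entropy of the sign pattern $\X_H=(g(\n+1),\dots,g(\n+H))$ relative to $\Y_H=\n\ (P_H)$ and uses approximate subadditivity to produce, by pigeonhole, a \emph{single} good scale $H\in[H_-,H_+]$ at which the mutual information is $o(H/\log H)$ --- not a statement about the law of $n\bmod p$ being nearly constant at a positive proportion of scales --- and no van der Corput step is used.
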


Thus for instance the $k=2$ case of the Chowla conjecture implies that
\begin{equation}\label{nx}
 \sum_{n \leq x} \lambda(n) \lambda(n+1) = o(x)
\end{equation}
as $x \to \infty$.  This can be compared with the twin prime conjecture, which is equivalent to the assertion that
\begin{equation}\label{nx-2}
 \sum_{n \leq x} \theta(n) \theta(n+2) \to \infty
\end{equation}
as $x \to \infty$, where $\theta(n) := \log p$ when $n$ is equal to a prime $p$, and $\theta(n):=0$ otherwise.

The $k=1$ case of the Chowla conjecture is equivalent to the prime number theorem.  The higher $k$ cases are open, although there are a number of partial results available if one allows for some averaging in the $b_1,\dots,b_k$ parameters; see \cite{mrt}, \cite{FH} for some recent results in this direction.  The bound \eqref{nx} is equivalent to the assertion that the pairs $(\lambda(n), \lambda(n+1))$ attain each of the four sign patterns $(+1,+1)$ $(+1,-1)$, $(-1,+1)$, $(-1,-1)$ $(\frac{1}{4}+o(1))x$ times.  In \cite{hpw} it was shown that the $(+1,+1)$ and $(-1,-1)$ patterns occur at least $(\frac{1}{60}+o(1))x$ times, and the $(+1,-1)$ and $(-1,+1)$ patterns occur $\gg x \log^{-7-\eps} x$ times for $\eps > 0$.  In the recent paper \cite{mr} it was shown that in fact all four sign patterns occur $\gg x$ times, so in particular
$$
\left| \sum_{n \leq x} \lambda(n) \lambda(n+1) \right| \leq (1-\delta) x $$
for some absolute constant $\delta > 0$ and sufficiently large $x$.  An analogous claim for sign patterns $(\lambda(n),\lambda(n+1),\lambda(n+2))$ of length three was shown in \cite{mrt-2}, building upon the previous result in \cite{hil} that showed that all sign patterns of length three occur infinitely often.

The first main result of this paper is to obtain a different averaged form of the Chowla conjecture in the first nontrivial case $k=2$, in which one averages in $x$ rather than in $b_1,\dots,b_k$.  More precisely, we show

\begin{theorem}[Logarithmically averaged Chowla conjecture]\label{lach}  Let $a_1,a_2$ be natural numbers, and let $b_1,b_2$ be integers such that $a_1 b_2 - a_2 b_1 \neq 0$.  Let $1 \leq \omega(x) \leq x$ be a quantity depending on $x$ that goes to infinity as $x \to \infty$.  Then one has
\begin{equation}\label{nx-w}
 \sum_{x/\omega(x) < n \leq x} \frac{\lambda(a_1 n + b_1) \lambda(a_2 n+b_2)}{n} = o( \log \omega(x) )
\end{equation}
as $n \to \infty$.
\end{theorem}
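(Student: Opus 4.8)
The plan is to argue by contradiction, trading the multiplicativity of $\lambda$ at small primes against an information-theoretic dichotomy — the ``entropy decrement'' — to get past the parity obstruction, and then to close with harmonic analysis and short-interval estimates. After routine reductions (extracting common factors and using complete multiplicativity), it is enough to treat the representative case
$$ \sum_{x/\omega(x) < n \leq x} \frac{\lambda(n)\,\lambda(n+h)}{n} = o(\log\omega(x)), \qquad h\in\Z\setminus\{0\}, $$
the case of general linear forms $a_i n+b_i$ being analogous, with the condition $a_1 b_2 - a_2 b_1\neq0$ ensuring nondegeneracy at each prime. Suppose this fails: there are $\delta>0$ and a sequence $x\to\infty$ along which the corresponding normalised logarithmic average of $\lambda(n)\lambda(n+h)$ exceeds $\delta$ in absolute value. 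The one elementary input is a \emph{dilation identity}: substituting $n = pm$ and using $\lambda(pm) = -\lambda(m)$ gives, for every prime $p$,
$$ \E^{\log}_{n}\,\lambda(n)\lambda(n+h) \;=\; p\,\E^{\log}_{n}\big[\mathbf 1_{p\mid n}\,\lambda(n)\lambda(n+ph)\big] \;+\; O\!\left(\tfrac{\log p}{\log\omega}\right), $$
the error coming only from the mismatch between the ranges $[x/\omega,x]$ and $p[x/\omega,x]$; so the scale-$x$ correlation is governed by the behaviour of $\lambda$ along the progressions $n,n+p,\dots,n+Hp$ with $p\mid n$, which by the same multiplicativity is a $\pm1$ rescaling of the behaviour of $\lambda$ on a genuine short interval of length $H$.

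The heart of the matter, and the step I expect to be the main obstacle, is the entropy decrement argument, whose role is to locate a scale at which $\lambda$ is ``locally unbiased'' with respect to divisibility by medium-sized primes. Model $\n$ as a logarithmically distributed random integer in $[x/\omega,x]$; by the Chinese remainder theorem and the length of the range, the residues $(\n\bmod p)_{p\in\mathcal P}$ over a range $\mathcal P=\{p: P\leq p\leq P^{1+\eta}\}$ of medium primes are jointly close to independent. Write $\X=(\lambda(\n+1),\dots,\lambda(\n+H))$. Averaging the dilation identity over $p\in\mathcal P$ and invoking the assumed lower bound forces $\X$ to be detectably dependent on $\n\bmod p$ for a $\tfrac1p$-weighted positive proportion of $p\in\mathcal P$. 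But $\sum_{p\in\mathcal P} I(\X;\n\bmod p)\leq \HH(\X)+(\text{error})$, the inequality holding because the residues are nearly independent and the error being controlled — via a concentration-of-measure bound (Hoeffding/Azuma type) on $\sum_{p\in\mathcal P}p\,\mathbf 1_{p\mid\n}$, which also lets one discard the atypical $\n$ with too many prime factors in $\mathcal P$ — by the length of the range; since $\HH(\X)\leq H$, this can only be reconciled with the lower bound across a whole sequence of scales by forcing a suitable entropy functional to decrease by a definite amount each time the parameters advance, which is impossible as it is nonnegative and bounded. Hence at (infinitely many) scales $x$ we are instead in the complementary regime, in which $\X$ is approximately independent of $\n\bmod p$ for most $p\in\mathcal P$. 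Getting the bookkeeping for $P$, $H$, $\eta$ to balance simultaneously against the error in the dilation identity, the near-independence of the residues, and the hypotheses needed in the final step is the delicate part.

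In this ``good'' regime the averaged dilation identity rewrites the scale-$x$ correlation, up to $o(1)$, as an average of products of $\lambda(\n)$ with short-interval averages $\tfrac1H\sum_{j\leq H}\lambda(\,\cdot\,+j)$ of $\lambda$. I would then expand these averages, run the Hardy--Littlewood circle method in the relevant variables, and split into minor and major arcs. On the minor arcs a restriction estimate for $\lambda$ — in the spirit of the Green--Tao restriction theorem for the primes, bounding $\int|\widehat{\lambda\mathbf 1_I}(\theta)|^{q}\,d\theta$ for some $q$ a little above $2$ — dominates the Fej\'er-type kernels produced by the short averages. On the major arcs the estimate reduces to short-interval averages of $\lambda$ twisted by a Dirichlet character and by an Archimedean character $n^{it}$, and these are $o(1)$ by the Matom\"aki--Radziwi\l\l--Tao theorem on averages of modulated multiplicative functions in short intervals. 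Hence the scale-$x$ correlation is $o(1)$, contradicting the lower bound $\delta$, and \eqref{nx-w} follows. It is precisely the need to control such short sums against more general \emph{local} twists (local nilsequences) in the higher-order analogue that the abstract flags as the one missing ingredient there.
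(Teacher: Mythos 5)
Your outline follows the paper's strategy in all essentials: argue by contradiction, exploit complete multiplicativity at medium primes together with the approximate dilation invariance that logarithmic averaging provides, run an entropy decrement argument to find a scale of weak independence, decouple by concentration of measure, and finish with the circle method, a restriction-type estimate, and the Matom\"aki--Radziwi{\l}{\l}--Tao short-interval theorem. But several steps are stated in a form that would break in a write-up. First, the information-theoretic bookkeeping: the pigeonhole must run over the window length $H$, ranging over a chain of scales inside $[H_-,H_+]$ at a \emph{fixed} $x$ (this is precisely why the log-averaging cannot be dropped), not over ``infinitely many scales $x$''; and a single inequality $\sum_{p \in {\mathcal P}} \I(\X; \n\ (p)) \leq \HH(\X) \ll H$ is not enough, because the number of primes used is only about $\eps^2 H/\log H$, so one needs mutual information $o(H/\log H)$ --- a full $\log H$ below the trivial bound. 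The paper gets this by iterating the subadditivity-with-decrement inequality along a slowly growing chain of $H$'s, the decrement terminating because a series of the shape $\sum_j 1/(j\log j \log\log j)$ diverges. Also, your phrase ``the assumed lower bound forces $\X$ to be detectably dependent on $\n \bmod p$'' presupposes the harmonic-analysis estimate you only establish afterwards; the clean order is that the decrement argument is unconditional, produces one good $H$ with $\I(\X_H,\Y_H)$ small, and only then is the lower bound contradicted.

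Second, the decoupling and the endgame are misdescribed. Hoeffding is applied to the graph sum $F(x,\cdot)$ as a function of a \emph{uniform} $y \in \Z/P_H\Z$ (where the residues modulo distinct $p$ are exactly independent), and is combined with the weak uniform distribution of $\Y_H$ conditioned on a good value of $\X_H$; this is what licenses replacing $1_{p \mid \n+j}$ by $1/p$ --- not a concentration bound on $\sum_p p\,1_{p\mid \n}$. The decoupled object is the prime-shift correlation $\sum_{p \in {\mathcal P}_H} p^{-1} \sum_{j \leq H} \lambda(\n+j)\lambda(\n+j+ph)$, \emph{not} ``an average of products of $\lambda(\n)$ with short-interval averages $\frac1H\sum_{j\leq H}\lambda(\cdot+j)$''; the latter would already be $o(1)$ by Matom\"aki--Radziwi{\l}{\l} alone and would make the entropy and circle-method machinery pointless. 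Finally, the restriction estimate is for the primes, not for $\lambda$: one bounds the fourth moment of $S_H(\alpha) = \sum_{p \in {\mathcal P}_H} c_p e(\alpha p)/p$ (Green--Tao restriction for the Selberg sieve, or an upper-bound sieve count of solutions to $p_1+p_2=p_3+p_4$), so that only $O(1)$ frequencies --- a set independent of $\n$, which is crucial --- carry a large coefficient; an $L^q$ bound with $q$ slightly above $2$ for $\widehat{\lambda 1_I}$ is essentially trivial (interpolation with Plancherel) and gives no gain. At those $O(1)$ bad frequencies (major-arc ones, in the Liouville case) one invokes the MRT bound on $\sum_{j \leq H} \lambda(\n+j) e(j\xi/H)$ averaged in $\n$, and elsewhere Cauchy--Schwarz and Plancherel against the smallness of $S_H$ suffice. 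With these corrections your sketch coincides with the paper's proof.
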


Thus for instance this theorem implies (after setting $\omega(x) := x$, $a_1=a_2=b_2=1$ and $b_1=0$) that
\begin{equation}\label{nx-3}
 \sum_{n \leq x} \frac{\lambda(n) \lambda(n+1)}{n} = o(\log x )
\end{equation}
as $x \to \infty$; this can be deduced from \eqref{nx} by a routine summation by parts argument, but is a strictly weaker estimate.  From this and the elementary estimate $\sum_{n \leq x} \frac{\lambda(n)}{n} = o(\log x)$ we see that for any sign pattern $(\epsilon_1,\epsilon_2) \in \{-1,+1\}^2$, the set $\{n: (\lambda(n),\lambda(n+1)) = (\epsilon_1,\epsilon_2)\}$ occurs with logarithmic density $1/4$, that is to say
$$ \frac{1}{\log x} \sum_{n \leq x: (\lambda(n),\lambda(n+1))=(\epsilon_1,\epsilon_2)} \frac{1}{n} = \frac{1}{4}+o(1)$$
as $x \to \infty$.

More generally, one can deduce Theorem \ref{lach} from the $k=2$ case of Conjecture \ref{chow} by summation by parts; we leave the details to the interested reader.  Conversely, the $k=2$ case of Conjecture \ref{chow} is equivalent to the limiting case of Theorem \ref{lach} in which $\omega$ is fixed rather than going to infinity.  The logarithmic averaging is unfortunately needed in our method in order to obtain an approximate affine invariance in the $n$ variable; we do not know how to modify our argument to remove this averaging.  However, the logarithmic averaging can be tolerated in some applications (for instance to the Erd\"os discrepancy problem, discussed below).

Estimates such as \eqref{nx}, \eqref{nx-2}, \eqref{nx-w}, \eqref{nx-3} are well known to be subject to the parity problem obstruction (see e.g. \cite[Chapter 16]{opera}), and thus cannot be resolved purely by existing sieve-theoretic (or circle method) techniques that rely solely on ``linear'' estimates for the Liouville function.  We avoid the parity obstacle here by using a new ``bilinear'' estimate\footnote{Bilinear estimates have been used to get around the parity obstacle in previous works, most notably in the Friedlander-Iwaniec result \cite{fi} on primes of the form $a^2+b^4$.} for the Liouville function, which relates to bounds such as \eqref{nx-w} through the multiplicativity property $\lambda(pn) = -\lambda(n)$ of the Liouville function at small primes $p$, and which is proved using the (weak) expansion properties of a certain random graph, closely related to one recently introduced in \cite{mrt-2}.  To describe this strategy in somewhat informal terms, let us specialise to the case of establishing \eqref{nx-3} for simplicity.  Suppose for contradiction that the left-hand side of \eqref{nx-3} was large and (say) positive.  Using the multiplicativity $\lambda(pn) = -\lambda(n)$, we conclude that
$$ \sum_{n \leq x} \frac{\lambda(n) \lambda(n+p) 1_{p|n}}{n} $$
is also large and positive for all primes $p$ that are not too large; note here how the logarithmic averaging allows us to leave the constraint $n \leq x$ unchanged.  Summing in $p$, we conclude that
$$ \sum_{n \leq x} \frac{ \sum_{p \in {\mathcal P}} \lambda(n) \lambda(n+p) 1_{p|n}}{n} $$
is large and positive for any given set ${\mathcal P}$ of medium-sized primes.  By a standard averaging argument, this implies that
\begin{equation}\label{pnj}
 \frac{1}{H} \sum_{j=1}^H \sum_{p \in {\mathcal P}} \lambda(n+j) \lambda(n+p+j) 1_{p|n+j} 
\end{equation}
is large for many choices of $n$, where $H$ is a medium-sized parameter at our disposal to choose, and we take ${\mathcal P}$ to be some set of primes that are somewhat smaller than $H$.  To obtain the required contradiction, one thus wants to demonstrate significant cancellation in the expression \eqref{pnj}.  As in \cite{mrt-2}, we view $n$ as a random variable, in which case \eqref{pnj} is essentially a bilinear sum of the random sequence $(\lambda(n+1),\dots,\lambda(n+H))$ along a random graph $G_{n,H}$ on $\{1,\dots,H\}$, in which two vertices $j, j+p$ are connected if they differ by a prime $p$ in ${\mathcal P}$ that divides $n+j$.  A key difficulty in controlling this sum is that for randomly chosen $n$, the sequence $(\lambda(n+1),\dots,\lambda(n+H))$ and the graph $G_{n,H}$ need not be independent.  To get around this obstacle we introduce a new argument which we call the ``entropy decrement argument'' (in analogy with the ``density increment argument'' and ``energy increment argument'' that appear in the literature surrounding Szemer\'edi's theorem on arithmetic progressions (see e.g. \cite{tao-survey}), and also reminiscent of the ``entropy compression argument'' of Moser and Tardos \cite{moser}).  This argument, which is a simple consequence of the Shannon entropy inequalities, can be viewed as a quantitative version of the standard subadditivity argument that establishes the existence of Kolmogorov-Sinai entropy in topological dynamical systems; it allows one to select a scale parameter $H$ (in some suitable range $[H_-,H_+]$) for which the sequence $(\lambda(n+1),\dots,\lambda(n+H))$ and the graph $G_{n,H}$ exhibit some weak independence properties (or more precisely, the mutual information between the two random variables is small).  With this additional property, one can use standard concentration of measure results such as the Hoeffding inequality \cite{hoeff} to approximate \eqref{pnj} by the significantly simpler expression
$$
 \frac{1}{H} \sum_{j=1}^H \sum_{p \in {\mathcal P}} \frac{\lambda(n+j) \lambda(n+p+j)}{p}.
$$
This latter expression can then be controlled in turn by an application of the Hardy-Littlewood circle method and an estimate for short sums of a modulated Liouville function established recently by Matom\"aki, Radziwi{\l}{\l} and the author in \cite{mrt}, which is based in turn on the results of Matom\"aki and Radziwi{\l}{\l} in \cite{mr}.

The arguments in this paper extend to other bounded multiplicative functions than the Liouville function, though as they rely in an essential fashion on multiplicativity at small primes, they unfortunately do not appear to have any bearing as yet on twin prime-type sums such as \eqref{nx-2}.  More precisely, we have the following logarithmically averaged and nonasymptotic version of the Elliott conjecture \cite{elliott} (in the ``corrected'' form introduced in \cite{mrt}):

\begin{theorem}[Logarithmically averaged nonasymptotic Elliott conjecture]\label{elliott}   Let $a_1,a_2$ be natural numbers, and let $b_1,b_2$ be integers such that $a_1 b_2 - a_2 b_1 \neq 0$.   Let $\eps > 0$, and suppose that $A$ is sufficiently large depending on $\eps,a_1,a_2,b_1,b_2$.  Let $x \geq \omega \geq A$, and let $g_1,g_2\colon \N \to \C$ be multiplicative functions with $|g_1(n)|, |g_2(n)| \leq 1$ for all $n$, with $g_1$ ``non-pretentious'' in the sense that
\begin{equation}\label{tax0}
 \sum_{p \leq x} \frac{1 - \operatorname{Re} g_1(p) \overline{\chi(p)} p^{-it}}{p} \geq A 
\end{equation}
for all Dirichlet characters $\chi$ of period at most $A$, and all real numbers $t$ with $|t| \leq Ax$.  Then
\begin{equation}\label{mang} \left|\sum_{x/\omega < n \leq x} \frac{g_1(a_1 n + b_1) g_2(a_2 n + b_2)}{n}\right| \leq \eps \log \omega.
\end{equation}
\end{theorem}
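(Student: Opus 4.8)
The plan is to follow the informal strategy outlined in the introduction, but in the more robust generality of arbitrary non-pretentious multiplicative functions. First I would argue by contradiction: suppose \eqref{mang} fails, so that the quantity
$$ S := \sum_{x/\omega < n \leq x} \frac{g_1(a_1 n + b_1) g_2(a_2 n + b_2)}{n} $$
has magnitude exceeding $\eps \log \omega$. By a normalisation (dividing through by $a_1 a_2$ and absorbing constants, and using that $g_1, g_2$ are bounded) I would reduce to a cleaner form of the sum, say with $a_1 = a_2 = 1$ at the cost of replacing $g_1, g_2$ by dilates; the hypothesis \eqref{tax0} is stable under such manipulations because the exceptional set of primes dividing $a_1$ is negligible in the relevant logarithmic sum. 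The key exploitation of multiplicativity is that for a prime $p$ not dividing $a_1$, one has $g_1(a_1(pn) + b_1) = g_1(p) g_1(a_1 n + b_1')$ whenever $p \mid a_1 n + b_1$ (choosing $b_1'$ appropriately), so summing the dilated identity over $n$ and using the logarithmic weight $1/n$ — which is what makes the range $x/\omega < n \leq x$ approximately invariant under $n \mapsto pn$ for $p$ small — I would derive that
$$ \frac{1}{\#\mathcal{P}} \sum_{p \in \mathcal{P}} g_1(p) g_2(p) \cdot \Big( \text{a sum over } n \text{ with } p \mid a_1 n + b_1 \text{ of } \tfrac{g_1 g_2}{n} \Big) $$
is still large in magnitude, for $\mathcal{P}$ a suitably chosen dyadic block of primes of medium size.

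Next I would unfold this into the bilinear short-interval form \eqref{pnj}: by a standard averaging (van der Corput / Cesàro) argument in $n$, the largeness of the $p$-summed quantity forces
$$ \frac{1}{H} \sum_{j=1}^{H} \sum_{p \in \mathcal{P}} g_1(n+j) g_2(n+j) 1_{p \mid n + j} $$
(up to the affine shifts $b_1, b_2$, which I suppress) to be large for a positive-logarithmic-density set of $n$, where $H$ is chosen somewhat larger than $\max \mathcal{P}$. Here I would invoke the entropy decrement argument: for a cleverly chosen scale $H \in [H_-, H_+]$ — the range being long enough (doubly-logarithmically many dyadic scales) that the subadditivity of entropy forces a scale with small mutual information between the random vector $(g_1(n+1), \dots, g_1(n+H))$ (and its $g_2$ analogue) and the random divisibility pattern $(1_{p \mid n+j})_{p \in \mathcal P, j \leq H}$. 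With this weak independence in hand, concentration of measure (Hoeffding's inequality, applied conditionally on the multiplicative data) lets me replace each indicator $1_{p \mid n+j}$ by its conditional mean $1/p$ up to an error that is small on average, converting the bilinear sum into the linear-in-$p$ expression
$$ \frac{1}{H} \sum_{j=1}^{H} \sum_{p \in \mathcal{P}} \frac{g_1(n+j) g_2(n+j)}{p}. $$

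Finally I would show this last expression is small, contradicting the largeness derived above. Writing $\sum_{p \in \mathcal P} p^{-1} e(p \alpha)$ as a multiplicative weight and expanding via the Hardy–Littlewood circle method, the diagonal/major-arc contribution is controlled by the non-pretentiousness hypothesis \eqref{tax0} (which, combined with Halász-type bounds, forces the relevant Fourier coefficients of $g_1$ to be small), while the minor-arc contribution is handled by the restriction estimate for the primes together with the Matomäki–Radziwiłł–Tao bound on averages of $g_1$ modulated by a linear phase $e(\alpha n)$ in short intervals of length $H$ — this is precisely the input that breaks the parity barrier. Taking $A$ large enough (hence $\omega$ large, hence many available scales and strong non-pretentiousness) makes every error term beatable. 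I expect the \textbf{main obstacle} to be the entropy decrement step: quantifying exactly how large the scale range $[H_-, H_+]$ must be, tracking how the mutual-information bound propagates through the conditional concentration estimate, and ensuring the resulting error is genuinely smaller than the $\eps \log \omega$ lower bound we are trying to contradict — in particular the interplay between the number of scales, the size of $\mathcal P$, and the quality of the short-interval estimate has to be balanced with care.
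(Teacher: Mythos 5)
Your proposal follows essentially the same route as the paper's proof: argue by contradiction, exploit multiplicativity at medium-sized primes together with the approximate dilation invariance of the logarithmic average to reach a bilinear expression over a random graph of prime divisibilities, run the entropy decrement argument to locate a scale $H\in[H_-,H_+]$ with small mutual information, use Hoeffding-type concentration to replace the indicators $1_{p\mid n+j}$ by $\frac{1}{p}$, and finish with the circle method combined with the Green--Tao restriction theorem for the primes and the Matom\"aki--Radziwi{\l}{\l}--Tao short-interval estimate, which is exactly where the non-pretentiousness hypothesis \eqref{tax0} enters. Apart from sketch-level imprecisions (the paper first reduces to unimodular, completely multiplicative $g_1,g_2$ and to equal leading coefficients $a_1=a_2=a$ rather than to $a_1=a_2=1$ via ``dilates'', and the second factor in your bilinear display must carry the $p$-dependent shift $j+ph$, which is what ultimately produces the exponential sum over primes you use at the end), this is the argument carried out in Sections 2 and 3 of the paper.
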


\begin{remark} Our arguments are in principle effective, and would yield an explicit value of $A$ as a function of $\eps,a_1,a_2,b_1,b_2$ if one went through all the arguments carefully, however we did not do so here as we expect\footnote{For instance, a back of the envelope calculation suggests that the decay rate in the right-hand side of \eqref{nx-3} provided by optimising all the parameters in the arguments in this paper is something like $O( \frac{\log x}{(\log\log\log x)^c} )$ for some small absolute constant $c>0$; similarly, the dependence of $A$ on $1/\eps$ provided by the arguments in this paper appears to be roughly triple-exponential in nature, at least in the model case where $g_1,g_2$ are completely multiplicative and take values on the unit circle.} the bounds to be rather poor.
\end{remark}

Theorem \ref{elliott} clearly implies the following asymptotic version:

\begin{corollary}[Logarithmically averaged Elliott conjecture]\label{elliott-avg} Let $a_1,a_2$ be natural numbers, and let $b_1,b_2$ be integers such that $a_1 b_2 - a_2 b_1 \neq 0$.  Let $g_1,g_2\colon \N \to \C$ be multiplicative functions bounded in magnitude by one, with $g_1$ ``non-pretentious'' in the sense that
\begin{equation}\label{tax}
 \inf_{|t| \leq Ax} \sum_{p \leq x} \frac{1 - \operatorname{Re} g_1(p) \overline{\chi(p)} p^{-it}}{p} \to \infty
\end{equation}
as $x \to \infty$ for all Dirichlet characters $\chi$ and all $A \geq 1$.  Then for any $1 \leq \omega(x) \leq x$ which goes to infinity as $x \to \infty$, one has
\begin{equation}\label{wow}
 \sum_{x/\omega(x) < n \leq x} \frac{g_1(a_1n+b_1) g_2(a_2n+b_2)}{n} = o( \log \omega(x) )
\end{equation}
as $x \to \infty$.
\end{corollary}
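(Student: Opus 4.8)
The plan is to deduce Corollary~\ref{elliott-avg} directly from Theorem~\ref{elliott} by a routine ``fix $\eps$, then pick a threshold'' argument; there is no substantive new content. It suffices to show that for every $\eps>0$ one has $\left|\sum_{x/\omega(x)<n\le x} g_1(a_1n+b_1)g_2(a_2n+b_2)/n\right|\le \eps\log\omega(x)$ for all sufficiently large $x$, since letting $\eps\to 0$ then gives the claimed bound $o(\log\omega(x))$.

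So fix $\eps>0$ and let $A=A(\eps,a_1,a_2,b_1,b_2)$ be the quantity produced by Theorem~\ref{elliott}. I would verify that for all large $x$ the hypotheses of Theorem~\ref{elliott} hold with this $A$. The size conditions are immediate: $x\ge\omega(x)$ by assumption, and $\omega(x)\ge A$ once $x$ is large, since $\omega(x)\to\infty$. The one point meriting comment is the passage from the qualitative non-pretentiousness hypothesis~\eqref{tax} to the quantitative form~\eqref{tax0}. Here I would use that there are only finitely many Dirichlet characters of period at most $A$; for each such $\chi$, applying~\eqref{tax} with this fixed $A$ gives $\inf_{|t|\le Ax}\sum_{p\le x}\frac{1-\operatorname{Re} g_1(p)\overline{\chi(p)}p^{-it}}{p}\to\infty$ as $x\to\infty$, and taking the minimum over the finite set of such $\chi$ preserves this divergence. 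Hence there is $x_0=x_0(\eps,a_1,a_2,b_1,b_2)$ so that for $x\ge x_0$ this minimum exceeds $A$, which is precisely~\eqref{tax0}.

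With every hypothesis of Theorem~\ref{elliott} verified for $x\ge x_0$ (enlarging $x_0$ if necessary to also ensure $\omega(x)\ge A$), the conclusion~\eqref{mang} yields $\left|\sum_{x/\omega(x)<n\le x} g_1(a_1n+b_1)g_2(a_2n+b_2)/n\right|\le\eps\log\omega(x)$ for all such $x$, which is what we wanted. The ``main obstacle'' is thus essentially vacuous: all the difficulty resides in Theorem~\ref{elliott} itself, and the only step in the corollary that is not purely formal --- reconciling the two formulations of non-pretentiousness --- is handled by the finiteness of the family of characters of bounded period.
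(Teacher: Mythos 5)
Your proposal is correct and matches the paper's intent: the paper simply states that Theorem~\ref{elliott} ``clearly implies'' Corollary~\ref{elliott-avg}, and your argument --- fixing $\eps$, invoking the $A$ from Theorem~\ref{elliott}, using finiteness of the characters of period at most $A$ to pass from \eqref{tax} to \eqref{tax0} for large $x$, and noting $\omega(x)\ge A$ eventually --- is exactly that routine deduction, correctly carried out.
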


\begin{remark}
If one replaced the conclusion \eqref{wow} with the stronger, non-logarithmically-averaged estimate
\begin{equation}\label{nox}
 \sum_{n \leq x} g_1(a_1n+b_1) g_2(a_2n+b_2) = o( x ),
\end{equation}
(say with $b_1,b_2 \geq 0$ to avoid the linear forms $a_1n+b_1, a_2n+b_2$ leaving the domain of $g_1,g_2$)
then this is the $k=2$ version of the corrected Elliott conjecture introduced in \cite{mrt}.  The original Elliott conjecture in \cite{elliott} replaced the condition \eqref{tax} with the weaker condition
$$ \sum_{p} \frac{1 - \operatorname{Re} g_1(p) \overline{\chi(p)} p^{-it}}{p} = +\infty$$
for all real numbers $t \in \R$, but it was shown in \cite{mrt} that this hypothesis was insufficient to establish \eqref{nox} (and it is not difficult to adapt the counterexample to also show that \eqref{wow} fails under this hypothesis).  On the other hand, it was shown in \cite{mrt} that the corrected Elliott conjecture held if one averaged in the $b_1,\dots,b_k$ parameters (rather than in the $x$ parameter as is done here).
\end{remark}

Using Vinogradov-Korobov error term zero-free region for $L$-functions (see \cite[\S 9.5]{mont}), it is not difficult to establish \eqref{tax} when $g$ is the Liouville function; see \cite[Lemma 2]{mrn} for a closely related calculation.  Thus Corollary \ref{elliott-avg} implies Theorem \ref{lach}.  Some condition of the form \eqref{tax} must be needed in order to derive the conclusion \eqref{wow}, as one can see by considering examples such as $g_1(n) := \chi(n) n^{it}$ and $g_2(n) := \overline{g_1(n)}$, where $\chi$ is a Dirichlet character of bounded conductor, $t$ is a real number of size $t = o(x)$, and $w$ is set equal to (for instance) $(x/|t|)^{1/2}$.  More precise asymptotics of sums such as those in \eqref{wow} in the ``pretentious'' case when $g_1$ and $g_2$ both behave like twisted Dirichlet characters $n \mapsto \chi(n) n^{it}$ were computed in the recent preprint of Klurman \cite{klurman}.

Corollary \ref{elliott-avg} also implies the asymptotic
$$ \sum_{n \leq x} \frac{g_1(n) g_2(n+1)}{n} = o(\log x )$$
as $x \to \infty$
when $g_1,g_2$ are multiplicative functions bounded by $1$, and at least one of $g_1,g_2$ is equal to the M\"obius function $\mu$.  Thus for instance one has
$$ \sum_{n \leq x} \frac{\mu(n) \mu(n+1)}{n}, \sum_{n \leq x} \frac{\mu^2(n) \mu(n+1)}{n}, \sum_{n \leq x} \frac{\mu(n) \mu^2(n+1)}{n}
 = o(\log x ).$$
The latter two estimates can be easily deduced from the prime number theorem in arithmetic progressions, but the first estimate is new.
 Combining this with the computations in \cite[\S 2]{mrt-2} (using logarithmic density in place of asymptotic probability), we conclude

\begin{corollary}[Sign patterns of the M\"obius function]  Let
$$ c := \prod_p \left( 1 - \frac{2}{p^2} \right) = 0.3226\dots$$
and let $(\epsilon_1,\epsilon_2) \in \{-1,0,+1\}^2$.  Then the set $\{ n: (\mu(n),\mu(n+1))=(\epsilon_1,\epsilon_2)\}$ has logarithmic density
\begin{itemize}
\item $1 - \frac{2}{\zeta(2)} + c = 0.1067\dots$ when $(\epsilon_1,\epsilon_2) = (0,0)$;
\item $\frac{1}{2} \left(\frac{1}{\zeta(2)}-c\right) = 0.1426\dots$ when $(\epsilon_1,\epsilon_2) = (+1,0), (-1,0), (0,+1), (0,-1)$; and
\item $\frac{c}{4} = 0.0806\dots$ when $(\epsilon_1,\epsilon_2) = (+1,+1), (+1,-1), (-1,+1), (-1,-1)$.
\end{itemize}
\end{corollary}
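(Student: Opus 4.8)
The plan is to derive the Corollary on sign patterns of the M\"obius function from the logarithmic-density version of the two-point Elliott/Chowla estimates, essentially by reproducing the probabilistic computation of \cite[\S 2]{mrt-2} but with logarithmic density in place of natural density. First I would recall the elementary identity $\mu^2(n) = \sum_{d^2 \mid n} \mu(d)$ and its consequence that, for any $y$, the density of squarefree integers $n$ with $n$ and $n+1$ both squarefree, and more generally all the joint densities of the events $\{n \equiv 0 \bmod d^2\}$ for finitely many $d$, are computable by the Chinese Remainder Theorem; summing the resulting inclusion-exclusion series gives the constants $1/\zeta(2)$ for a single variable and $c = \prod_p(1-2/p^2)$ for the joint event $\{\mu^2(n) = \mu^2(n+1) = 1\}$. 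This immediately yields the logarithmic densities for the patterns involving a zero: $(0,0)$ occurs when at least one of $n,n+1$ is non-squarefree, which by inclusion--exclusion on the squarefree events has logarithmic density $1 - 2/\zeta(2) + c$; the four patterns with exactly one zero split the remaining $1/\zeta(2) - c$ worth of density evenly by symmetry between $\mu = +1$ and $\mu = -1$ on the squarefree coordinate, giving $\tfrac12(1/\zeta(2) - c)$ each.

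The substantive point is the four sign patterns $(\pm1,\pm1)$. Conditioned on both $n$ and $n+1$ being squarefree — an event of logarithmic density $c$ — one must show that $(\mu(n),\mu(n+1))$ is asymptotically equidistributed among the four sign combinations, i.e.\ each has logarithmic density $c/4$. Equivalently, writing $\mu = \mu^2 \cdot \lambda_{\mathrm{sf}}$ where $\lambda_{\mathrm{sf}}(n) := \mu(n)/\mu^2(n)$ is the "sign of $\mu$" on squarefree $n$ (and, say, $1$ otherwise), it suffices to show that the logarithmically averaged two-point correlations $\sum_{n \le x} \frac{\mu(n)}{n}$, $\sum_{n \le x} \frac{\mu(n+1)}{n}$, and $\sum_{n \le x}\frac{\mu(n)\mu(n+1)}{n}$ are all $o(\log x)$, together with the mixed correlations $\sum_{n\le x}\frac{\mu^2(n)\mu(n+1)}{n}$ and $\sum_{n\le x}\frac{\mu(n)\mu^2(n+1)}{n}$ being $o(\log x)$. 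The first two follow from the prime number theorem (via $\sum_{n\le x}\mu(n)/n = o(\log x)$ and summation by parts), the two mixed ones from the prime number theorem in arithmetic progressions as the authors note, and the genuinely new input is
\[
 \sum_{n \le x} \frac{\mu(n)\mu(n+1)}{n} = o(\log x),
\]
which I would obtain from Corollary \ref{elliott-avg} applied with $g_1 = g_2 = \mu$, $a_1 = a_2 = b_2 = 1$, $b_1 = 0$, $\omega(x) = x$ — $\mu$ satisfies the non-pretentiousness hypothesis \eqref{tax} by the Vinogradov--Korobov zero-free region exactly as for $\lambda$.

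With these five correlation estimates in hand, the equidistribution of the sign patterns is a linear-algebra bookkeeping step: expand the indicator $1_{(\mu(n),\mu(n+1)) = (\epsilon_1,\epsilon_2)}$ on the squarefree locus as a combination of $1$, $\mu(n)$, $\mu(n+1)$, $\mu(n)\mu(n+1)$ times the squarefree indicators $\mu^2(n)$, $\mu^2(n+1)$, multiply by $1/n$, sum over $n \le x$, and divide by $\log x$; the main term $c/4$ comes from the constant term (the density of the double-squarefree event being $c$), and every other term is $o(1)$ by one of the correlation bounds. The one technical wrinkle to watch is handling the squarefree indicators: since $\mu^2$ is not itself a $\pm1$-valued multiplicative function, one should either write $1_{\mu(n) = +1, \text{$n$ squarefree}} = \tfrac12(\mu^2(n) + \mu(n))$ and similarly for the other sign, reducing everything to the four correlations $\sum \mu^2(n)\mu^2(n+1)/n$, $\sum \mu(n)\mu^2(n+1)/n$, $\sum \mu^2(n)\mu(n+1)/n$, $\sum \mu(n)\mu(n+1)/n$ over $n \le x$, the first of which is $(c+o(1))\log x$ by the squarefree sieve and the rest $o(\log x)$ as above. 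I expect the main obstacle to be purely notational rather than conceptual — namely carrying out the sieve computation for the logarithmically weighted count of pairs of squarefree integers uniformly enough to get the exact constants $1/\zeta(2)$ and $c$, and verifying that the error terms in that sieve are genuinely $o(\log x)$ and not merely $O(\log x)$ — but this is entirely standard and is exactly the computation of \cite[\S 2]{mrt-2} transported to the logarithmic setting, so no new idea is required beyond the already-established correlation bound.
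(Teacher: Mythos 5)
Your proposal is correct and follows essentially the same route as the paper: the paper obtains the new input $\sum_{n\le x}\mu(n)\mu(n+1)/n = o(\log x)$ from Corollary \ref{elliott-avg} (non-pretentiousness of $\mu$ via Vinogradov--Korobov), notes that the mixed correlations $\sum_{n\le x}\mu^2(n)\mu(n+1)/n$ and $\sum_{n\le x}\mu(n)\mu^2(n+1)/n$ are $o(\log x)$ by the prime number theorem in arithmetic progressions, and then cites the computations of \cite[\S 2]{mrt-2} with logarithmic density in place of asymptotic density — which is exactly the sieve-plus-expansion bookkeeping you carry out explicitly.
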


Again, the first two cases here could already be treated using the prime number theorem in arithmetic progressions, but the last case is new.
One can also use similar arguments to give an alternate proof of \cite[Theorem 1.9]{mrt-2} (that is to say, that all nine of the above sign patterns for the M\"obius function occur with positive lower density); we leave the details to the interested reader.

In a subsequent paper \cite{tao-erd}, we will combine Theorem \ref{elliott} with some arguments arising from the {\tt Polymath5} project \cite{polymath} to obtain an affirmative answer to the Erd\H{o}s discrepancy problem \cite{erdos}:

\begin{theorem}  Let $f\colon \N \to \{-1,+1\}$ be a function.  Then
$$ \sup_{d, n \in \N} \left|\sum_{j \leq n} f(jd)\right| = +\infty.$$
\end{theorem}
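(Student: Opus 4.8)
The plan is to deduce the Erd\H{o}s discrepancy problem from Theorem~\ref{elliott} (via its asymptotic consequence, Corollary~\ref{elliott-avg}) together with the reduction of the discrepancy problem to the multiplicative case carried out by the {\tt Polymath5} project \cite{polymath}. Suppose for contradiction that some $f\colon \N \to \{-1,+1\}$ has discrepancy at most a fixed constant $C$, i.e. $|\sum_{j \leq n} f(jd)| \leq C$ for all $n, d \in \N$. First I would invoke the {\tt Polymath5} reduction: by a pigeonholing/compactness argument it produces a \emph{completely multiplicative} function $g\colon \N \to S^1$ (taking values on the unit circle) with bounded partial sums, say $|S(n)| \leq C$ for all $n$, where $S(n) := \sum_{j \leq n} g(j)$. (For such a $g$ the dilated sums $\sum_{j \leq n} g(jd) = g(d) S(n)$ are automatically $O(1)$, so for completely multiplicative functions ``bounded discrepancy'' and ``bounded partial sums'' coincide.) In particular $|\sum_{j=1}^{H} g(n+j)| = |S(n+H) - S(n)| \leq 2C$ for all $n \geq 0$ and $H \geq 1$, whence, summing logarithmically,
\begin{equation}\label{eq:edp-second-moment}
\frac{1}{\log x} \sum_{2 \leq n \leq x} \frac{1}{n}\, \Bigl| \sum_{j=1}^{H} g(n+j) \Bigr|^2 \leq 4C^2 + o(1)
\end{equation}
as $x \to \infty$, uniformly in $H$.

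Next I would split into cases according to whether $g$ is non-pretentious in the sense of \eqref{tax} or not. In the non-pretentious case I expand the square on the left of \eqref{eq:edp-second-moment}. The $H$ diagonal terms $j_1 = j_2$ contribute $\frac{1}{\log x}\sum_{j=1}^{H}\sum_{2 \leq n \leq x}\frac{|g(n+j)|^2}{n} = H + o(1)$, using that $|g(m)| = 1$ for all $m$. For each of the $H^2 - H$ off-diagonal pairs $(j_1,j_2)$ with $j_1 \neq j_2$, Corollary~\ref{elliott-avg}, applied with $a_1 = a_2 = 1$, $b_1 = j_1$, $b_2 = j_2$, $g_1 = g$, $g_2 = \overline{g}$ (note $a_1 b_2 - a_2 b_1 = j_2 - j_1 \neq 0$, $g_1 = g$ is non-pretentious, and $g_2 = \overline{g}$ is multiplicative and $1$-bounded), gives $\sum_{2 \leq n \leq x}\frac{g(n+j_1)\overline{g(n+j_2)}}{n} = o(\log x)$; since $H$ is a fixed constant these finitely many off-diagonal terms contribute $o(1)$ in total. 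Hence the left-hand side of \eqref{eq:edp-second-moment} tends to $H$ as $x \to \infty$, forcing $H \leq 4C^2$. Choosing $H := \lfloor 4C^2 \rfloor + 1$ then yields the desired contradiction.

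It remains to treat the pretentious case, in which $g$ pretends (along a sequence of scales, possibly with scale-dependent $t$) to $\chi(n) n^{it}$ for some Dirichlet character $\chi$ and real $t$. Here I would argue by classical pretentious methods that a completely multiplicative function of modulus $1$ with bounded partial sums cannot in fact be pretentious, so that this case is vacuous. A triangle-inequality argument for the pretentious distance first forces the relevant values of $t$ to converge, so that $g$ pretends to $\chi(n) n^{it_0}$ with $\chi$ and $t_0$ \emph{fixed}. If $\chi$ is principal then, since bounded partial sums already preclude $g$ pretending to the constant function $1$ (as that would give $S(n) \sim cn$ with $c \neq 0$), we must have $t_0 \neq 0$, and Hal\'asz's theorem yields $S(n) \sim c\, n^{1+it_0}/(1+it_0)$ with $c \neq 0$, contradicting $|S(n)| \leq C$. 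If $\chi$ is non-principal one additionally uses the vanishing of $\chi$ at residue classes not coprime to its modulus, together with the multiplicative decomposition of $S(n)$ over the part of $n$ built from primes dividing the modulus, to again force unbounded partial sums; alternatively this sub-case can be handled by returning to $f$ itself and exhibiting a dilate tied to the conductor of $\chi$ along which $f$ has large discrepancy. Either way one reaches a contradiction, completing the proof.

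I expect the non-pretentious case to be the crux. It is precisely there that Theorem~\ref{elliott} --- the main result of this paper --- is indispensable: the off-diagonal correlations $\sum_{n \leq x} g(n+j_1)\overline{g(n+j_2)}/n$ are exactly two-point multiplicative correlation sums of the type obstructed by the parity barrier, and there is no elementary way to show that they are $o(\log x)$. The {\tt Polymath5} reduction is soft, the diagonal term is trivial, and the pretentious case is classical; thus the whole difficulty of the Erd\H{o}s discrepancy problem gets funnelled into the input from Theorem~\ref{elliott}. One minor point to be careful about is that the parameter $H$ must remain a fixed constant depending only on $C$ (as it does above), so that the purely qualitative bounds $o(\log x)$ coming from Corollary~\ref{elliott-avg} may be applied separately to each of the finitely many shift pairs with no uniformity in $H$ required; if one instead wanted a quantitative (effective) version of the discrepancy theorem one would work directly with the nonasymptotic Theorem~\ref{elliott} and track the dependence of $H$ on $C$ explicitly.
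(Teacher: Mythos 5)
Your overall strategy is the intended one: the paper itself does not prove this theorem but defers it to \cite{tao-erd}, which indeed combines Theorem \ref{elliott} with the {\tt Polymath5} reduction, and your non-pretentious computation (logarithmically averaging $|\sum_{j=1}^H g(n+j)|^2$, extracting the diagonal contribution $H$, and controlling each off-diagonal pair by the two-point estimate of Corollary \ref{elliott-avg} with $g_1=g$, $g_2=\overline{g}$) is exactly how the main result of this paper gets used there. However, there are two genuine gaps. First, the {\tt Polymath5} Fourier reduction does \emph{not} produce a deterministic completely multiplicative $g\colon \N \to S^1$ with uniformly bounded partial sums; a deterministic reduction of the discrepancy problem to the completely multiplicative case is not known (it was a sought-after but unachieved goal of \cite{polymath}). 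What the reduction actually yields is a \emph{random} (``stochastic'') completely multiplicative function $\mathbf{g}$ with $\sup_n \E\bigl|\sum_{j \leq n} \mathbf{g}(j)\bigr|^2 < \infty$, so the whole argument must be run in mean square over the random function, with the pretentious/non-pretentious dichotomy applied to realizations (or after conditioning); this is more delicate than the deterministic dichotomy you set up, and your ``pigeonholing/compactness'' step as stated assumes a stronger reduction than is available.

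Second, the pretentious case is not vacuous by classical methods. The principal-character subcase with a \emph{fixed} $t_0 \neq 0$ is indeed classical (Hal\'asz--Delange gives linear growth of $S(n)$ with a nonzero Euler-product constant), but the negation of \eqref{tax} only supplies characters and parameters $t$ that may vary with the scale and may be as large as $Ax$, so ``forcing the values of $t$ to converge'' already requires a real argument; and in the non-principal case, completely multiplicative $S^1$-valued functions pretending to $\chi(n) n^{it}$ can have very slowly growing partial sums (Borwein--Choi--Coons-type examples grow only logarithmically), so proving unboundedness there --- and, in the correct stochastic formulation, proving a quantitative lower bound on the mean-square partial sums when $\mathbf{g}$ is pretentious with non-negligible probability --- is a substantive piece of work, not a one-line consequence of $\chi$ vanishing on classes sharing a factor with the modulus. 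In \cite{tao-erd} this pretentious analysis occupies a significant portion of the proof. In short: correct skeleton and correct use of Theorem \ref{elliott} for the crux, but the reduction step is overstated and the pretentious case still needs an actual proof.
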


\subsection{Notation}

We adopt the usual asymptotic notation of $X \ll Y$, $Y \gg X$, or $X = O(Y)$ to denote the assertion that $|X| \leq CY$ for some constant $C$.  If we need $C$ to depend on an additional parameter we will denote this by subscripts, e.g. $X = O_\eps(Y)$ denotes the bound $|X| \leq C_\eps Y$ for some $C_\eps$ depending on $Y$.  Similarly, we use $X = o_{A \to \infty}(Y)$ to denote the bound $|X| \leq c(A) Y$ where $c(A)$ depends only on $A$ and goes to zero as $A \to \infty$.

If $E$ is a statement, we use $1_E$ to denote the indicator, thus $1_E=1$ when $E$ is true and $1_E=0$ when $E$ is false.

Given a finite set $S$, we use $|S|$ to denote its cardinality.

For any real number $\alpha$, we write $e(\alpha) := e^{2\pi i \alpha}$; this quantity lies in the unit circle $S^1 := \{ z \in \C: |z|=1\}$.  By abuse of notation, we can also define $e(\alpha)$ when $\alpha$ lies in the additive unit circle $\R/\Z$.

All sums and products will be over the natural numbers $\N =\{1,2,\dots\}$ unless otherwise specified, with the exception of sums and products over $p$ which is always understood to be prime.

We use $d|n$ to denote the assertion that $d$ divides $n$, and $n\ (d)$ to denote the residue class of $n$ modulo $d$.  We use $(a,b)$ to denote the greatest common divisor of $a$ and $b$.

We will frequently use probabilistic notation such as the expectation $\E \X$ of a random variable $\X$ or a probability $\P(E)$ of an event $E$; later we will also need the Shannon entropy $\HH(\X)$ of a discrete random variable, as well as related quantities such as conditional entropy $\HH(\X|\Y)$ or mutual information $\I(\X,\Y)$, the definitions of which we review in Section \ref{entropy}.  We will use boldface symbols such as $\X$, $\Y$ or $\n$ to refer to random variables.

\subsection{Acknowledgments}

The author is supported by NSF grant DMS-0649473 and by a Simons Investigator Award. The author also thanks Andrew Granville, Ben Green, Kaisa Matom\"aki, Maksym Radziwi{\l}{\l}, and Will Sawin for helpful discussions, corrections, and comments, and the anonymous referees for a careful reading of the paper and many useful suggestions and corrections.

\section{Preliminary reductions}

In this section we make a number of basic reductions, in particular reducing matters to a probabilistic problem involving a random graph, somewhat similar to one considered in \cite{mrt-2}.  Readers who are interested just in the case of the Liouville function (Theorem \ref{lach}) can skip the initial reductions and move directly\footnote{For the application to the Erd\H{o}s discrepancy problem in \cite{tao-erd}, one only needs the special case when $g_2 = \overline{g_1}$ and $g_1$ is completely multiplicative and takes values in $S^1$.  In that case one can also move directly to Theorem \ref{elliott-red}, skipping the initial reductions.} to Theorem \ref{elliott-red} below.

As mentioned in the introduction, Theorem \ref{lach} is a special case of Corollary \ref{elliott-avg}, which is in turn a corollary of Theorem \ref{elliott}.  Thus it will suffice to establish Theorem \ref{elliott}.

We first reduce to the case when $g_1$ takes values on the unit circle $S^1$:

\begin{proposition}  In order to establish Theorem \ref{elliott}, it suffices to do so in the special case where $|g_1(n)|=1$ for all $n$.
\end{proposition}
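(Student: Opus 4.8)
The plan is to reduce to the case $|g_1(n)|=1$ by replacing $g_1$ with an auxiliary completely multiplicative function of modulus one that agrees with $g_1$ ``on average'' in the relevant pretentious sense, and absorbing the discrepancy into the $g_2$ factor. First I would observe that a general multiplicative function $g_1$ with $|g_1(n)| \le 1$ can be written in the form $g_1(p^j) = |g_1(p^j)| \, h_1(p^j)$ where $h_1$ is a multiplicative function of modulus $1$ (choosing $h_1(p^j)$ to be any unimodular number in the direction of $g_1(p^j)$, and $h_1(p^j) = 1$ when $g_1(p^j) = 0$; extend $h_1$ completely multiplicatively if one wishes). The factor $|g_1(n)|$ is a nonnegative multiplicative function bounded by $1$; the idea is that it contributes a ``tame'' weight which can be handled by a first-moment / divisor-bound argument, while the genuinely oscillatory content of $g_1$ is carried by the unimodular $h_1$.

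The key point is to check that the non-pretentiousness hypothesis \eqref{tax0} is inherited by $h_1$ (possibly with a slightly smaller constant, which is harmless since $A$ is at our disposal). For this I would use the elementary inequality $1 - \operatorname{Re}(zw) \ge (1-\operatorname{Re} z) + c(1 - |w|)$ or, more to the point, the fact that for $|g_1(p)| \le 1$ one has
\begin{equation*}
 1 - \operatorname{Re} h_1(p)\overline{\chi(p)} p^{-it} \le \frac{1}{|g_1(p)|}\bigl(1 - \operatorname{Re} g_1(p)\overline{\chi(p)}p^{-it}\bigr) + \bigl(1 - |g_1(p)|\bigr)\cdot(\text{bounded}),
\end{equation*}
together with the observation that the mass $\sum_{p \le x}(1-|g_1(p)|)/p$ either is small (in which case $|g_1(n)|$ is close to $1$ on average and we are essentially back in the unimodular case) or is large, in which case $g_1$ is pretentious to the zero function and \eqref{mang} follows trivially by a crude bound on $\sum_{n} |g_1(a_1n+b_1)|/n$. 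Making this dichotomy quantitative — tracking how the threshold in \eqref{tax0} degrades — is the only place that requires care, and I expect it to be the main obstacle: one must ensure the constant $A'$ for $h_1$ still tends to infinity with $A$, uniformly over $\chi$ and $t$ in the allowed ranges, and this forces a somewhat delicate splitting of the prime sum according to whether $|g_1(p)|$ is close to $1$.

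Granting that, the reduction is completed as follows. Apply the already-established unimodular case of Theorem \ref{elliott} with $g_1$ replaced by $h_1$ and $g_2$ replaced by $\tilde g_2(n) := |g_1(a_1 n + b_1)|\, g_2(a_2 n + b_2)/(\cdots)$ — but since $\tilde g_2$ need not be multiplicative, instead I would run the telescoping directly: write $g_1(a_1 n+b_1) = |g_1(a_1n+b_1)| h_1(a_1 n + b_1)$ and note that $|g_1(a_1n+b_1)| = \prod_{p^j \| a_1 n + b_1} |g_1(p^j)|$, expand this via a suitable (finite, by crude truncation at primes $p \le A$, with the tail controlled by the pretentiousness-to-zero alternative above) convolution identity, and for each resulting arithmetic-progression condition on $n$ apply the unimodular Theorem \ref{elliott} to the twisted pair $h_1, g_2$ along that progression. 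Summing the $o(\log\omega)$ bounds, with the number of terms and the loss from each bounded uniformly in terms of $\eps, a_1, a_2, b_1, b_2$, yields \eqref{mang} for the original $g_1$. The bookkeeping here is routine; all the content is in the pretentiousness dichotomy of the previous paragraph.
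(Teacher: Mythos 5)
There is a genuine gap, and it is not where you expect it. The pretentiousness-inheritance step you flag as the main obstacle is in fact the easy part: in the regime where $\sum_{p\le x}(1-|g_1(p)|)/p$ is bounded, the pointwise inequality $1-\operatorname{Re}\bigl(h_1(p)\overline{\chi(p)}p^{-it}\bigr) \ge \bigl(1-\operatorname{Re}(g_1(p)\overline{\chi(p)}p^{-it})\bigr) - \bigl(1-|g_1(p)|\bigr)$ transfers \eqref{tax0} at once (your displayed inequality, which divides by $|g_1(p)|$, is the wrong move since $|g_1(p)|$ may vanish). The real problem is your final step, where you absorb the modulus factor $|g_1(a_1n+b_1)|$ by writing it as a convolution/divisor sum and truncating at primes $p\le A$. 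Writing $|g_1| = 1 * f$ with $f(p) = |g_1(p)|-1$, the tail of that expansion contributes on the order of $\log\omega\sum_{d>D}|f(d)|/d$, and in the branch of your dichotomy where you need this, the only information available is $\sum_{p\le x}(1-|g_1(p)|)/p < A_0$ with $A_0$ \emph{large}; this does not make the tail small for any truncation point $D$ — the deficiency mass can sit entirely on primes near $x^{1/2}$, say, and then $\sum_{d>D}|f(d)|/d \gg 1$ no matter how $D$ is chosen. You cannot fix this by lowering the dichotomy threshold to something small like $\eps^{C}$, because then the other branch fails: the Hal\'asz/mean-value saving for the nonnegative function $|g_1|$ is only $\exp(-c\cdot\mathrm{threshold})$, which is not below $\eps$. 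So the two branches cannot be glued; the regime $\sum_{p\le x}(1-|g_1(p)|)/p \asymp 1$ is left uncovered.

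The paper's proof shares your first branch (large deficiency mass implies the mean value of $|g_1|$ is tiny, and \eqref{mang} follows from the triangle inequality), but handles the complementary case by a probabilistic device rather than a divisor expansion: it constructs a random multiplicative function $\mathbf{g}'_1$ with values in $\{-1,+1\}$, jointly independent at prime powers, with $\E\,\mathbf{g}'_1(p^j)=|g_1(p^j)|$, so that $\mathbf{g}_1:=\mathbf{g}'_1 g''_1$ is unimodular and $\E\,\mathbf{g}_1(n)=g_1(n)$ for every $n$. Markov's inequality shows that with probability $1-O(1/A_0)$ the hypothesis \eqref{tax0} holds for $\mathbf{g}_1$ with constant $A/2$, one applies the unimodular case pathwise on that event (bounding the exceptional event trivially by $O(\log\omega)$), and then takes expectations; linearity of expectation replaces the truncation you needed and eliminates the problematic middle regime entirely. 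If you want to rescue your approach, you would need to replace the truncated convolution step by something of this kind.
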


\begin{proof}  Suppose that $g_1$ takes values in the unit disk.  Then we may factorise $g_1 = g'_1 g''_1$ where $g'_1,g''_1$ are multiplicative, with $g'_1 := |g_1|$ taking values in $[0,1]$ and $g''_1$ taking values in the unit circle $S^1$.  

Let $A_0$ be a large quantity (depending on $a_1,a_2,b_1,b_2,\eps$) to be chosen later; we assume that $A$ is sufficiently large depending on $a_1,a_2,b_1,b_2,\eps,A_0$.  Suppose first that
$$
\sum_{p \leq x} \frac{1-g'_1(p)}{p} \geq A_0.
$$ 
By Mertens' theorem and the largeness of $A_0$ and $x$, this implies that
$$
\sum_{p \leq y} \frac{1-g'_1(p)}{p} \geq \frac{A_0}{2}
$$
for every $x^{1/A_0} \leq y \leq x$ (say).  Applying the Halasz inequality (see e.g. \cite{ten} or \cite[Corollary 1]{gs}) we conclude that
$$
\frac{1}{y} \sum_{n \leq y} g'_1(n) \ll A_0 \exp(-A_0/2) $$
for all $x^{1/A_0} \leq y \leq x$ (assuming $x \geq A$ and $A$ is sufficiently large depending on $A_0$).  From this and the nonnegativity and boundedness of $g'_1(n)$ it is easy to see that
$$ \sum_{x/\omega \leq n \leq x} \frac{g'_1(a_1 n+b_1)}{n} = o_{A_0 \to \infty}( \log \omega )$$
since $x \geq \omega \geq A$ and $A$ is large compared to $A_0$, and $A_0$ is large compared to $a_1,b_1$.  Since $g_1(a_1n+b_1)g_2(a_2n+b_2)$ is bounded in magnitude by $g'_1(a_1n_1+b_1)$, the claim \eqref{mang} now follows from the triangle inequality (taking $A_0$ large enough).

It remains to treat the case when
$$
\sum_{p \leq x} \frac{1-g'_1(p)}{p} < A_0.
$$ 
We now use the probabilistic method to model $g'_1$ by a multiplicative function of unit magnitude. Since $g'_1(p^j)$ takes values in the convex hull of $\{-1,+1\}$ for every prime power $p^j$, we can construct a random multiplicative function ${\mathbf g}'_1$ taking values in $\{-1,+1\}$, such that the values ${\mathbf g}'_1(p^j)$ at prime powers are jointly independent and have mean $\E {\mathbf g}'_1(p^j) = g'_1(p^j)$.  By multiplicativity and joint independence, we thus have $\E {\mathbf g}'_1(n) = g'_1(n)$ for arbitrary $n$.  By linearity of expectation we have
$$
\E \sum_{p \leq x} \frac{1-{\mathbf g}'_1(p)}{p} < A_0.
$$ 
so by Markov's inequality we see with probability $1 - O(1/A_0)$ that
$$
\sum_{p \leq x} \frac{1-{\mathbf g}'_1(p)}{p} < A_0^2.
$$
Let us restrict to this event, and set ${\mathbf g}_1 := {\mathbf g}'_1 g''_1$, thus ${\mathbf g}_1$ is a random multiplicative function taking values in $S^1$ whose mean is $g_1$.  By the triangle inequality we have
$$ {\mathbf g}_1(p) = g_1(p) + O( 1 - g'_1(p) ) + O( 1 - {\mathbf g}'_1(p) )$$
and hence by \eqref{tax0} and the triangle inequality again we have
$$ \sum_{p \leq x} \frac{1 - \operatorname{Re} \mathbf{g}_1(p) \overline{\chi(p)} p^{-it}}{p} \geq A/2$$
for all Dirichlet characters $\chi$ of period at most $A$ and all $t$ with $|t| \leq Ax$, if $A$ is large enough.  Using the hypothesis that Theorem \ref{elliott} holds when $g_1$ has unit magnitude, we conclude (again taking $A$ large enough) that
\begin{equation}\label{sa}
\left|\sum_{x/\omega < n \leq x} \frac{\mathbf{g}_1(a_1 n + b_1) g_2(a_2 n + b_2)}{n}\right| \leq \frac{\eps}{2} \log \omega
\end{equation}
with probability $1-O(1/A_0)$.  In the exceptional event that this fails, we can still bound the left-hand side of \eqref{sa} by $O( \log \omega )$.  Taking expectations, we obtain \eqref{mang} as desired (for $A_0$ large enough).
\end{proof}

A similar argument allows one to also reduce to the case where $|g_2(n)|=1$ for all $n$ (indeed, the argument is slightly simpler as \eqref{tax0} is unaffected by changes in $g_2$).

Next, we upgrade the functions $g_1,g_2$ from being multiplicative to being completely multiplicative.

\begin{proposition} In order to establish Theorem \ref{elliott}, it suffices to do so in the special case where $|g_1(n)|=|g_2(n)|=1$ for all $n$, and $g_1$ is completely multiplicative.
\end{proposition}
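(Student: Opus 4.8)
The plan is to exploit that a unit-modulus multiplicative function differs from a completely multiplicative one only at prime powers $p^j$ with $j\ge 2$, and that such ``powerful'' moduli are sparse. Given $g_1$ multiplicative with $|g_1(n)|=1$, let $\tilde g_1$ be the completely multiplicative function with $\tilde g_1(p):=g_1(p)$, and set $h:=g_1 * (\mu\tilde g_1)$, so that $g_1=\tilde g_1 * h$ and $h$ is multiplicative with $h(p^j)=g_1(p^j)-g_1(p)g_1(p^{j-1})$. Thus $h(p)=0$, so $h$ is supported on powerful numbers, and $|h(p^j)|\le 2$, whence $\sum_d |h(d)|\, d^{-1+\sigma}<\infty$ for every $\sigma<1/2$; in particular $\sum_{d>D}|h(d)|/d\to 0$ as $D\to\infty$. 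Writing $g_1(a_1 n+b_1)=\sum_{d\mid a_1n+b_1} h(d)\,\tilde g_1\!\big(\tfrac{a_1n+b_1}{d}\big)$ and interchanging summation, the left side of \eqref{mang} becomes
$$ \sum_{d} h(d) \sum_{\substack{x/\omega<n\le x\\ d\mid a_1 n+b_1}} \frac{\tilde g_1\!\big(\tfrac{a_1n+b_1}{d}\big)\,g_2(a_2n+b_2)}{n}. $$

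I would then split at a threshold $d\le D$ versus $d>D$, with $D$ a constant depending only on $\eps,a_1,a_2,b_1,b_2$. The tail $d>D$ is handled trivially: the inner sum is $\ll_{a_1}\frac{(a_1,d)}{d}(\log\omega+1)$ since $n$ is confined to at most one residue class, so the contribution of $d>D$ is $\ll_{a_1}(\log\omega)\sum_{d>D}|h(d)|/d\le\tfrac{\eps}{3}\log\omega$ once $D$ is large. For each fixed $d\le D$ with $(a_1,d)\mid b_1$, I would reparametrise $n$ over the residue class it occupies modulo $d':=d/(a_1,d)$ by writing $n=d'm+r$; this converts $\tfrac{a_1n+b_1}{d}$ into a linear form $a_1'm+s$ with $a_1'=a_1/(a_1,d)$, converts $a_2n+b_2$ into a linear form with leading coefficient $a_2d'$, preserves the nonvanishing of the determinant (which becomes $(a_1b_2-a_2b_1)/(a_1,d)\ne 0$), and — after replacing the weight $1/n=1/(d'm+r)$ by $1/(d'm)$, at a cost of $O(1/d')$ per $d$, hence $\ll_{a_1}\sum_d|h(d)|/d=O(1)$ in total — produces a constant multiple of a sum $\sum_{X/\Omega<m\le X}\tilde g_1(a_1'm+s)g_2(a_2''m+b_2'')/m$ of exactly the shape governed by the completely multiplicative case of Theorem \ref{elliott}.

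To finish I would apply that case to each of the $O_\eps(1)$ remaining sums, with $\eps$ replaced by a small $\eps'$ and $A$ by a threshold $A'$. The hypothesis \eqref{tax0} transfers to $\tilde g_1$ with only a negligible loss: $\tilde g_1$ agrees with $g_1$ at every prime, and since $X:=x/d'\ge x/D$, Mertens' theorem gives $\sum_{p\le X}\frac{1-\operatorname{Re}\tilde g_1(p)\overline{\chi(p)}p^{-it}}{p}\ge A-\sum_{X<p\le x}\frac{2}{p}\ge A'$ for $x$ large (guaranteed since $x\ge\omega\ge A$), uniformly over $\chi$ of period $\le A'$ and $|t|\le A'X$; moreover the new coefficients are all bounded in terms of $\eps,a_1,a_2,b_1,b_2$ (through $D$), so the ``$A$ sufficiently large'' clause applies. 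Summing the resulting bounds $\eps'\cdot O(\log\Omega)$ against $\sum_{d\le D}|h(d)|/d'\ll_{a_1}1$ and choosing $\eps'$ small, then $D$ large, then $A$ large, yields \eqref{mang}. The one step that genuinely demands care — and the main, if unglamorous, obstacle — is the bookkeeping around the truncated ranges: one must verify that the rescaled cutoff $\Omega$ stays within a bounded factor of $\omega$ (so $\log\Omega\ll\log\omega$), paying attention to the regime $\omega\approx x$ where the $m$-sum effectively begins at $O(1)$ and $\Omega\approx X\approx x/d'$ — still only a factor $O(D)$ from $\omega$, since $\omega\ge A\ge D^2$ — and that $\Omega\ge A'$, all of which follow once $A$ is large enough in terms of $\eps,a_1,a_2,b_1,b_2$. (The analogous reduction making $g_2$ completely multiplicative is not needed for the present proposition but could be carried out the same way.)
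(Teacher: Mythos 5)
Your proposal is correct and follows essentially the same route as the paper: the Dirichlet factorisation $g_1 = \tilde g_1 * h$ with $h(p)=0$ and $|h(p^j)|\le 2$, a split at a threshold in $d$, the triangle inequality plus convergence of $\sum_d |h(d)|/d$ for the tail, and a change of variables to invoke the completely multiplicative case for each small $d$. Your write-up merely makes explicit some steps the paper leaves as "an appropriate change of variables" (the residue-class reparametrisation, the nonvanishing of the new determinant, the transfer of the non-pretentiousness hypothesis, and the range bookkeeping), all of which check out.
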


\begin{proof}  By the previous reductions we may already assume that $|g_1(n)|=|g_2(n)|=1$ for all $n$.  If $g_1$ is not completely multiplicative, we can introduce the completely multiplicative function $\tilde g_1$ with $\tilde g_1(p) = g_1(p)$ for all $p$.  Clearly, $\tilde g_1$ takes values in $S^1$.  From M\"obius inversion (twisted by $\tilde g_1$) we can factor $g_1$ as a Dirichlet convolution $g_1 = \tilde g_1 * h$ for a multiplicative function $h$ with $h(p)=0$ and $|h(p^j)| \leq 2$ for all $j \geq 2$; indeed we have $h(p^j) = g(p^j) - g(p) g(p^{j-1})$ for all $j \geq 1$.  The left-hand side of \eqref{mang} can then be rewritten as
$$
\left|\sum_d h(d) \sum_{x/\omega < n \leq x: d|a_1n+b_1} \frac{\tilde g_1(\frac{a_1 n + b_1}{d}) g_2(a_2 n + b_2)}{n}\right|.$$
As in the previous proposition, we choose a quantity $A_0$ that is sufficiently large depending on $a_1,a_2,b_1,b_2,\eps$, and assume $A$ is sufficiently large depending on $A_0,a_1,a_2,b_1,b_2,\eps$.  We consider first the contribution to the above sum of a single value of $d$ with $d \leq A_0$.  We crudely bound $|h(d)|$ by (say) $A_0$.  The constraint $d|a_1n+b_1$ constrains $n$ to some set of residue classes modulo $d$; the number of such classes is trivially bounded by $d$ and hence by $A_0$.  Making an appropriate change of variables and using the hypothesis that Theorem \ref{elliott} holds for completely multiplicative $g_1$ (replacing $\eps$ by $\eps/2A_0^3$, and assuming $A$ large enough), we thus have
$$ \left|\sum_{x/\omega < n \leq x: d|a_1n+b_1} \frac{\tilde g_1(\frac{a_1 n + b_1}{d}) g_2(a_2 n + b_2)}{n}\right| \leq \frac{\eps}{2A_0^2} \log \omega$$
for each $d \leq A_0$.  Thus the total contribution of those $d$ with $d \leq A_0$ is at most $\frac{\eps}{2} \log \omega$.

Now we turn to the contribution where $d > A_0$.  Here, we can use the triangle inequality to bound $\sum_{x/\omega < n \leq x: d|a_1n+b_1} \frac{\tilde g_1(\frac{a_1 n + b_1}{d}) g_2(a_2 n + b_2)}{n}$ by $O( \frac{\log \omega}{d} )$, so the net contribution of this case is $O( \log \omega \sum_{d > A_0} \frac{|h(d)|}{d} )$.  However, from taking Euler products one sees that
$$ \sum_d \frac{|h(d)|}{d^{2/3}} = O(1) $$
(say), and thus
$$ \sum_{d > A_0} \frac{|h(d)|}{d}  = O( A_0^{-1/3} ).$$
Taking $A_0$ large enough, we obtain the claim.
\end{proof}

A similar argument allows one to also reduce to the case where $g_2$ is completely multiplicative.  As $g_1,g_2$ are now multiplicative and take values in $S^1$, we have
$$ g_1(a_1n+b_1) g_2(a_2n+b_2) = \overline{g_1}(a_2) \overline{g_2}(a_1) g_1(a_1 a_2 n + a_2 b_1) g_2(a_1 a_2 n + a_1 b_2 )$$
so by replacing $a_1,a_2,b_1,b_2$ with $a_1a_2, a_1a_2, b_1a_2, b_2 a_1$ respectively, we may assume that $a_1=a_2=a$, $b_1 = b$, and $b_2 = b+h$ for some natural number $a$, integer $b$, and nonzero integer $h$.  

Finally, we observe that we can strengthen the condition $\omega \leq x$ slightly to $\omega \leq \frac{x}{\log x}$, since for $\frac{x}{\log x} < \omega \leq x$, the contribution of those $n$ for which $n \leq \log x$ can be seen to be negligible.  (Indeed, we could reduce to the case where $\omega$ grew slower than any fixed function of $x$ going to infinity, but the restriction $\omega \leq \frac{x}{\log x}$ will suffice for us, as it prevents the $n$ parameter from being extremely small.)

Putting all these reductions together, we see that Theorem \ref{elliott} will be a consequence of the following theorem.

\begin{theorem}[Logarithmically averaged nonasymptotic Elliott conjecture]\label{elliott-red}   Let $a$ be a natural number, and let $b,h$ be integers with $h \neq 0$.   Let $\eps > 0$, and suppose that $A$ is sufficiently large depending on $\eps,a,b,h$.  Let $x \geq \frac{x}{\log x} \geq \omega \geq A$, and let $g_1,g_2\colon \N \to S^1$ be completely multiplicative functions such that \eqref{tax0} holds for all Dirichlet characters $\chi$ of period at most $A$, and all real numbers $t$ with $|t| \leq Ax$.  Then
$$ \left|\sum_{x/\omega < n \leq x} \frac{g_1(a n + b) g_2(an + b+h)}{n}\right| \leq \eps \log \omega.$$
\end{theorem}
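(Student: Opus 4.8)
The plan is to run the informal strategy outlined in the introduction, which I will now make precise. Suppose for contradiction that $|\sum_{x/\omega<n\le x} g_1(an+b)g_2(an+b+h)/n|$ is at least $\eps\log\omega$ for a suitably large $A$. The first move is to exploit complete multiplicativity at a small prime $p$: writing each $n$ with $p\mid n$ as $n=pm$ gives $g_1(an+b)=g_1(p)g_1(am+b/p\cdots)$ only when $p\mid b$, so instead one inserts the prime into the linear form directly. Concretely, for a medium prime $p$ one has $g_i(p)g_i(\ell)=g_i(p\ell)$, and restricting to $n$ divisible by $p$ and substituting $n=pm$ shows that $\sum_{x/p\omega<m\le x/p}g_1(apm+b)g_2(apm+b+h)/m$ inherits the same lower bound up to an $O(1/p)$-type loss; crucially the logarithmic weight $1/n$ turns into $1/m$ with the \emph{same} effective range $m\le x/p$, so the range constraint is essentially preserved. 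Averaging this identity over a set $\mathcal P$ of primes $p\in[P_-,P_+]$ and then, via a standard shifting/averaging argument (replacing $n$ by $n+j$ for $1\le j\le H$ with $P_+\ll H$), one concludes that the bilinear-type average
\[
\frac1H\sum_{j=1}^H\sum_{p\in\mathcal P}g_1(a(n+j)+b)g_2(a(n+j)+b+h)\,1_{p\mid a(n+j)+b}
\]
is large in magnitude for a positive-logarithmic-density set of $n\le x$. Modeling $n$ as a random variable $\n$ drawn (with logarithmic weighting) from $[1,x]$, this is a sum of the random vector $(g_1(a\n+\cdots),g_2(\cdots))$ restricted along the random graph $G_{\n,H}$ whose edges are pairs $(j,j')$ with $j-j'$ a prime in $\mathcal P$ dividing $a(n+j)+b$.

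The heart of the argument is that, in general, the multiplicative-function values on $[a\n+1,a\n+aH]$ and the graph $G_{\n,H}$ are correlated, which blocks any direct concentration estimate. To defeat this I would introduce the entropy decrement argument: considering the Shannon entropy of the distribution of the length-$H$ pattern of $g_1,g_2$ values (as a function of the residue of $\n$ modulo the primorial of primes up to $H$), subadditivity forces the existence of some scale $H\in[H_-,H_+]$ — with $H_\pm$ chosen in a huge range so that there are many candidate scales — at which the mutual information $\I$ between the value-pattern and the relevant arithmetic information (the residues $a\n+\cdots\bmod p$ for $p\in\mathcal P$) is small, say $o(1)$ times $H/\log H$ or similar. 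Once such a good scale is fixed, the value-pattern and the graph are approximately independent, so one can condition on the value-pattern and treat the graph as (close to) a random graph with independent-ish edges of probability $\approx 1/p$; then the Hoeffding inequality (or a martingale concentration bound) lets one replace the indicator $1_{p\mid a(n+j)+b}$ by its "expected value" $1/p$, incurring an error that is small on average. This reduces the large quantity to
\[
\Bigl|\frac1H\sum_{j=1}^H\sum_{p\in\mathcal P}\frac{g_1(a(n+j)+b)g_2(a(n+j)+b+h)}{p}\Bigr|,
\]
still large for many $n$, hence on average in $n$ as well.

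To finish, I would feed this linear-in-$j$ average into harmonic analysis. The factor $\sum_{p\in\mathcal P}1/p$ is essentially constant, so after renormalizing one must show that $\frac1H\sum_{j=1}^H g_1(a(n+j)+b)g_2(a(n+j)+b+h)$ is small on average over $n\le x$ — equivalently, a short-interval average of the product of two modulated completely multiplicative functions of unit modulus. This is exactly the regime controlled by the Matom\"aki--Radziwi\l\l--Tao result in \cite{mrt} on averages of modulated multiplicative functions in short intervals, combined with the Hardy--Littlewood circle method and a restriction estimate for the primes to handle the "major arc" (pretentious) contributions: the non-pretentiousness hypothesis \eqref{tax0} for $g_1$ — which in particular rules out $g_1(n)$ correlating with $\chi(n)n^{it}$ for any $\chi$ of small conductor and $|t|\le Ax$ — guarantees that every frequency is effectively "minor arc" for $g_1$, so the short-interval average is $o(1)$ on average. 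Putting the chain of estimates together contradicts the assumed lower bound once $A$ is large enough.

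The main obstacle is the entropy decrement step: one must set up the entropy functional and the range $[H_-,H_+]$ so that (i) the subadditivity/telescoping genuinely produces a scale with small mutual information, quantitatively strong enough to make the Hoeffding substitution error beat $\eps$, and (ii) this can be done simultaneously with all the other parameter constraints ($P_\pm$ versus $H$, $H$ versus $x$, the admissibility of the change of variables $n\mapsto pm$). The subtlety is that "small mutual information" must be converted into a genuine quantitative independence statement usable in a concentration inequality — effectively a Pinsker-type bound — and tracking the dependence of all error terms on $A$, $\eps$, $a$, $b$, $h$ through this step (and then through the circle method and the \cite{mrt} input) is where the real work lies; the loss here is presumably what produces the triple-exponential dependence of $A$ on $1/\eps$ alluded to in the remarks.
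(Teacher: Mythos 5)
Your overall architecture (multiplicativity at medium primes, random-graph reformulation, entropy decrement, Hoeffding concentration, then harmonic analysis) is the paper's, but the final reduction as you state it has a genuine gap, and it is not cosmetic. After the multiplicativity step the separation between the two arguments must become $ph$, not stay $h$: the identity being exploited is $1_{\n=b\ (a)} g_1(\n) g_2(\n+h) = c_p 1_{p\n = pb\ (ap)} g_1(p\n) g_2(p\n+ph)$, so the quantity that is large for each $p$ is (after the affine invariance of the logarithmic average) $\E\, c_p 1_{\n+j = pb\ (ap)} g_1(\n+j) g_2(\n+j+ph)$, with the shift $ph$ varying with $p$; this is what accumulates into \eqref{hph}. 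In your displayed bilinear expression the shift is the fixed $h$ and only the divisibility indicator depends on $p$. With that version, once Hoeffding replaces $1_{p\mid \cdot}$ by $1/p$, the sum over $p$ factors out completely and you are left with showing that $\frac1H\sum_{j\le H} g_1(a(n+j)+b)g_2(a(n+j)+b+h)$ is small on logarithmic average in $n$ --- but by approximate translation invariance this is essentially the original correlation you assumed to be large, so the argument is circular; moreover no existing input (in particular not \cite{mrt}) bounds short-interval averages of a product $g_1(\cdot)g_2(\cdot+h)$ of two multiplicative functions --- that is precisely the parity-barrier quantity the theorem is about. The result of \cite{mrt} used here, namely \eqref{hq-a}, controls only a single multiplicative function twisted by a linear phase, $\sup_\alpha \E|\sum_{j=1}^H g_1(a\n+j)e(\alpha j)|$.

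The missing idea is how the $p$-dependent shift is exploited. With shift $ph$, the post-concentration quantity \eqref{epha} is a genuine bilinear form $\sum_{p\in{\mathcal P}_H}\frac{c_p}{p}\sum_j 1_{j=pb\ (a)} x_{1,j}x_{2,j+ph}$; Fourier expansion in $j \in \Z/H\Z$ isolates the $p$-dependence into the exponential sum over primes $S_H(\alpha)=\sum_{p\in{\mathcal P}_H}\frac{c_p}{p}e(\alpha p)$, the Green--Tao restriction theorem for the primes shows $S_H$ can only be large on a bounded set $\Xi_H$ of frequencies (crucially independent of $\n$), and only then is \eqref{hq-a} applied, to the linear-in-$g_1$ sums $\sum_j g_1(a\n+j)e(-j\xi/H)$ at those $O_{a,h,\eps}(1)$ frequencies. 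Without this step your proof does not close. The rest of your sketch is consistent with the paper: the entropy decrement does produce a good scale $H\in[H_-,H_+]$ with mutual information $o(H/\log H)$, and converting this into usable near-independence is done in the paper by a conditional-entropy counting argument (Lemma \ref{weak-unif}) rather than a Pinsker-type inequality, though either route can be made to work.
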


Let $a,b,h,\eps$ be as in the above theorem\footnote{The reader may initially wish to restrict to the model case $a=1, b=0, h=1$ (and also $g_1=g_2=\lambda$) in what follows to simplify the notation and arguments slightly.}.  Suppose for sake of contradiction that Theorem \ref{elliott-red} fails for this set of parameters.  By shrinking $\eps$, we may assume that $\eps$ is sufficiently small depending on $a,b,h$.  Thus for instance any quantity of the form $O_{a,b,h}(\eps)$ can be assumed to be much smaller than $1$, any quantity of the form $O_{a,b,h}(\eps^2)$ can be assumed to be much smaller than $\eps$, and so forth.  We will also need a number of large quantities, chosen in the following order\footnote{For the purposes of optimising the quantitative bounds, it seems that one should take $H_- = \exp(\eps^{-C_1})$, $H_+ = \exp(\exp(\exp(\eps^{-C_2})))$, and $A = \exp(\exp(\exp(\eps^{-C_3})))$ for some large absolute constants $C_1 < C_2 < C_3$, at least in the regime where $a,b,h$ are bounded and $\eps$ is small, and after adjusting some of the estimates below to fully optimise the bounds.}:

\begin{itemize}
\item We choose a natural number $H_-$ that is sufficiently large depending on $a,b,h,\eps$.
\item Then, we choose a natural number $H_+$ that is sufficiently large depending on $H_-,a,b,h,\eps$.
\item Finally, we choose a quantity $A>0$ that is sufficiently large depending on $H_+,H_-,a,b,h,\eps$.
\end{itemize}
The quantity $A$ is of course the one we will use in Theorem \ref{elliott-red}.  The intermediate parameters $H_-,H_+$ will be the lower and upper ranges for a certain medium-sized scale $H \in [H_-,H_+]$ which we will later select using a pigeonholing argument which we call the ``entropy decrement argument''.

We will implicitly take repeated advantage of the above relative size assumptions between the parameters $A,H_+,H_-,a,b,h,\eps$ in the sequel to simplify the estimates; in particular, we will repeatedly absorb lower order error terms into higher order error terms when the latter would dominate the former under the above assumptions.  Thus for instance $O_{H_+,H_-,a,b,h,\eps}(1) \times o_{A \to \infty}(1)$ can be simplified to just $o_{A \to \infty}(1)$ by the assumption that $A$ is sufficiently large depending on all previous parameters, and $o_{A \to \infty}(1) + o_{H_- \to \infty}(1)$ can similarly be simplified to $o_{H_- \to \infty}(1)$.
The reader may wish to keep the hierarchy
$$ a,b,h \ll \frac{1}{\eps} \ll H_- \ll p \ll H \ll H_+ \ll A \leq \omega \leq \frac{x}{\log x} \leq x$$
and also
$$ x \geq n \geq x/\omega \geq \log x \geq \log A \gg H_+$$
in mind in the arguments that follow.

As we are assuming that Theorem \ref{elliott-red} fails for the indicated choice of parameters, there exist real numbers
\begin{equation}\label{abig}
 x \geq \omega \geq A
\end{equation}
and completely multiplicative functions $g_1, g_2\colon \N \to S^1$  such that
\begin{equation}\label{tax0-A}
 \sum_{p \leq x} \frac{1 - \operatorname{Re} g_1(p) \overline{\chi(p)} p^{-it}}{p} \geq A 
\end{equation}
for all Dirichlet characters $\chi$ of period at most $A$, and all real numbers $t$ with $|t| \leq Ax$, but such that
\begin{equation}\label{go} 
\left|\sum_{x/\omega < n \leq x} \frac{g_1(an+b) g_2(an+b+h)}{n}\right| > \eps \log \omega.
\end{equation}

To use the hypothesis \eqref{tax0-A}, we apply the results in \cite{mrt} to control short sums of $g_1$ modulated by Fourier characters.

\begin{proposition}\label{mrp}  Let the notation and assumptions be as above.  For all $H_- \leq H \leq H_+$, one has
\begin{equation}\label{gch-0}
\sup_\alpha \sum_{x/\omega < n \leq x} \frac{1}{Hn} \left|\sum_{j=1}^H g_1(n+j) e(j \alpha)\right| \ll \frac{\log\log H}{\log H} \log \omega.
\end{equation}
In particular, one has
\begin{equation}\label{gch}
\sup_\alpha \sum_{x/\omega < n \leq x} \frac{1}{Hn} \left|\sum_{j=1}^H g_1(n+j) e(j \alpha)\right| = o_{H_- \to \infty}( \log \omega ).
\end{equation}
\end{proposition}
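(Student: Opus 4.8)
The plan is to deduce Proposition~\ref{mrp} from the short‑interval estimate for \emph{modulated} multiplicative functions proved in \cite{mrt}, after converting the logarithmically weighted sum over $(x/\omega,x]$ into $O(\log\omega)$ dyadic averages and after localizing the non‑pretentiousness hypothesis \eqref{tax0-A} to each dyadic scale. Recall that \cite{mrt} provides the following: if $g$ is a $1$‑bounded multiplicative function that is non‑pretentious at a scale $X$ — in the sense that $\sum_{p\le X}\frac{1-\operatorname{Re} g(p)\overline{\chi(p)}p^{-it}}{p}$ exceeds a suitable absolute threshold for all $|t|\le 2X$ and all Dirichlet characters $\chi$ of small conductor — and $H$ lies in an admissible range ($H$ large and, say, $H\le X^{c}$ for a small absolute constant $c$), then
$$ \frac{1}{X}\sum_{X<n\le 2X}\sup_\alpha\left|\frac{1}{H}\sum_{j=1}^H g(n+j)e(j\alpha)\right|\ll \frac{\log\log H}{\log H} $$
(the sum over $n$ may equivalently be read as an integral, the two differing by $O(1/X)$).

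First I would move the supremum past the outer sum, using $\sup_\alpha\sum_n a_n(\alpha)\le \sum_n\sup_\alpha a_n(\alpha)$, so that it suffices to bound $\sum_{x/\omega<n\le x}\frac{1}{Hn}\sup_\alpha|\sum_{j=1}^H g_1(n+j)e(j\alpha)|$. Then I would partition $(x/\omega,x]$ into the dyadic blocks $(X,2X]$ with $X$ a power of two; there are $O(\log\omega)$ of them, and on each block $\frac1n$ is comparable to $\frac1X$, so the contribution of a block is $\ll \frac1X\sum_{X<n\le 2X}\frac1H\sup_\alpha|\sum_j g_1(n+j)e(j\alpha)|$ plus a negligible $O(1/X)$ boundary error. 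Now fix $\delta:=\frac{\log\log H}{2\log H}$.

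For the blocks with $X<x^\delta$ I would use only the trivial bound $\bigl|\frac1H\sum_j g_1(n+j)e(j\alpha)\bigr|\le 1$. Such blocks occur at all only when $x/\omega<x^\delta$, i.e.\ when $\log\omega\ge(1-\delta)\log x$, so there are $\ll\delta\log x$ of them and their total contribution is $\ll\delta\log x\ll\delta\log\omega\ll\frac{\log\log H}{\log H}\log\omega$, which is within the claimed bound. For the blocks with $x^\delta\le X\le x$ I would invoke the estimate from \cite{mrt}. Its constraint on $H$ is met comfortably, since $X\ge x^\delta$ is vastly larger than $H\le H_+$ (recall $\log x\ge\log A\gg H_+$, whence $x^\delta\gg H^{100}$). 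Its non‑pretentiousness hypothesis at scale $X$ follows from \eqref{tax0-A}: for $|t|\le 2X\le Ax$ and $\chi$ of conductor $\le A$, \eqref{tax0-A} gives $\sum_{p\le x}\frac{1-\operatorname{Re} g_1(p)\overline{\chi(p)}p^{-it}}{p}\ge A$, and discarding the primes in $(X,x]$ costs at most $\sum_{X<p\le x}\frac2p=2\log\frac{\log x}{\log X}+O(1)\le 2\log\frac1\delta+O(1)=O(\log\log H)$ by Mertens' theorem, so $\sum_{p\le X}\frac{1-\operatorname{Re} g_1(p)\overline{\chi(p)}p^{-it}}{p}\ge A-O(\log\log H)$, which exceeds the threshold since $A$ is enormous compared to $H$. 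Hence each such block contributes $\ll\frac{\log\log H}{\log H}$, and summing over the $O(\log\omega)$ high‑scale blocks yields \eqref{gch-0}; the bound \eqref{gch} is then immediate because $\frac{\log\log H}{\log H}=o_{H_-\to\infty}(1)$ uniformly in $H\ge H_-$.

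The one point that needs genuine care — and what I would flag as the main obstacle — is that the non‑pretentiousness must be verified at the \emph{local} scale $X$ of each dyadic block rather than only at the global scale $x$: a multiplicative function can be non‑pretentious at scale $x$ yet pretentious at a much smaller scale $X$, in which case its short sums near $X$ need not cancel, so one cannot apply the theorem at scale $x$ across the board. The remedy is exactly the $x^\delta$ cutoff above together with the observation that removing the primes in $(X,x]$ from the pretentious‑distance sum costs only $O(\log\log H)$, which is harmless against $A$. Everything else is routine bookkeeping within the hierarchy $1/\eps\ll H_-\le H\le H_+\ll A\le\omega\le x/\log x\le x$.
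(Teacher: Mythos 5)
Your dyadic decomposition, the trivial bound at scales below $x^{\delta}$, and the localization of \eqref{tax0-A} from scale $x$ to scale $X$ (losing only $\sum_{X<p\le x}2/p=O(\log\log H)$, harmless against $A$) are all sound, and in spirit this is the same reduction to the short-interval result of \cite{mrt} that the paper uses. But there is one genuine gap as written: the input you attribute to \cite{mrt} has the supremum over $\alpha$ \emph{inside} the average over $n$, i.e. a bound on $\frac1X\sum_{X<n\le 2X}\sup_\alpha\bigl|\frac1H\sum_{j=1}^H g(n+j)e(j\alpha)\bigr|$. No such maximal estimate is proved in \cite{mrt}; what is proved there (Lemma 2.2 and Theorem 2.3) is the bound for each \emph{fixed} $\alpha$, with constants uniform in $\alpha$. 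Indeed, the maximal quantity you invoke is exactly the expression \eqref{xnil} discussed in Section \ref{remarks} of this paper, where it is pointed out that such sup-inside averages are \emph{not} covered by the existing literature and that their absence is the main obstruction to the three-point case. So the step ``move the supremum past the outer sum'' commits you to an ingredient that is not available, and the block-by-block bound you then quote cannot be justified as stated.

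The repair is immediate and costs nothing in your argument: since the quantity to be bounded in \eqref{gch-0} already has $\sup_\alpha$ \emph{outside} the sum over $n$, fix $\alpha$ first, run your dyadic decomposition verbatim using the fixed-$\alpha$ estimate of \cite[Lemma 2.2, Theorem 2.3]{mrt} on each block with $X\ge x^{\delta}$, and take the supremum only at the end, using the uniformity in $\alpha$; \eqref{gch} then follows as you say. For comparison, the paper's proof is shorter still: it applies \cite[Lemma 2.2, Theorem 2.3]{mrt} with $W:=\log^5 H$ at every scale $X$ with $x/2\omega\le X\le 2x$ (the hypotheses there only require $X\ge \frac{x}{2\omega}\ge\frac{\log x}{2}\ge\frac{\log A}{2}$ so that $W\ll\min(A,(\log X)^{1/125})$, the non-pretentiousness being quoted in the form \eqref{tax0-A}), and then averages over $X$; your $x^{\delta}$ cutoff and re-verification of non-pretentiousness at scale $X$ is a reasonable, correctly executed workaround if one insists on a hypothesis localized at scale $X$, but the sup-inside citation must be corrected for the proof to stand.
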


We remark that Proposition \ref{mrp} is the \emph{only} way in which we will take advantage of the hypothesis \eqref{tax0-A}, which may now be discarded in the arguments that follow.

\begin{proof}  Let $\alpha \in \R$.  Applying \cite[Lemma 2.2, Theorem 2.3]{mrt} (with $W := \log^5 H$), we see that 
\begin{align*}
\frac{1}{X} \sum_{X \leq n \leq 2X} \left|\frac{1}{H} \sum_{j=1}^H g_1(n+j) e(\alpha j)\right| &\ll \frac{\log\log H}{\log H}
\end{align*}
for all $\frac{x}{2\omega} \leq X \leq 2x$; for the purposes of verifying the hypotheses in \cite{mrt}, we note that $X \geq \frac{x}{2\omega} \geq \frac{\log x}{2} \geq \frac{\log A}{2}$,
and hence $W = \log^5 H$ will be much less than $A$ or $(\log X)^{1/125}$.   Averaging this estimate from $X$ between $x/2\omega$ and $2x$, we obtain \eqref{gch-0} and hence \eqref{gch}.
\end{proof}

It will be convenient to interpret these estimates in probabilistic language (particularly when we start using the concept of Shannon entropy in the next section).
We introduce a (discrete) random variable $\n$ in the interval $\{ n \in \N: x/\omega < n \leq x\}$ by setting
$$ \P( \n = n ) = \frac{1/n}{\sum_{n \in \N: x/\omega < n \leq x} \frac{1}{n}}$$
whenever $n$ lies in this interval.

From \eqref{abig} and our hypothesis $\omega \leq x/\log x$,  we see that
$$\sum_{n \in \N: x/\omega < n \leq x} \frac{1}{n} = (1 + o_{A \to \infty}(1)) \log \omega.$$
We conclude from \eqref{go} that
\begin{equation}\label{face}
 | \E g_1(a\n+b) g_2(a\n+b+h) | \gg \eps 
\end{equation}
while from \eqref{gch} we conclude that
\begin{equation}\label{hq}
\sup_\alpha \E  \left|\sum_{j=1}^H g_1(\n+j) e(\alpha j)\right| = o_{H_- \to \infty}(H) 
\end{equation}
uniformly for all $H_- \leq H \leq H_+$.

The logarithmic averaging in the $n$ variable gives an approximate affine invariance to these probabilities and expectations (cf. \cite[Lemma 2.3]{mrt-2}), which is of fundamental importance to our approach:

\begin{lemma}[Approximate affine invariance]\label{linear}  Let $q$ be a natural number bounded by $H_+$, and let $r$ be a fixed integer with $|r| \leq H_+$.  Then for any event $P(\n)$ depending on $\n$, one has
$$ \P( P(\n) \hbox{ and } \n = r \ (q) ) = \frac{1}{q} \P( P( q\n+r ) ) + o_{A \to \infty}(1).$$
More generally, for any complex-valued random variable $X(\n)$ depending on $\n$ and bounded in magnitude by $O(1)$, one has
$$ \E( X(\n) 1_{\n = r \ (q)} ) = \frac{1}{q} \E( X(q\n+r) ) + o_{A \to \infty}(1).$$
\end{lemma}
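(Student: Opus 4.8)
The plan is to prove the second (more general) assertion; here $X$ is regarded as a fixed function $\N \to \C$ with $|X| = O(1)$, so that $X(q\n+r)$ makes sense even when $q\n+r$ leaves the support of $\n$, and the first assertion is then the special case $X = 1_{P}$. Write $S := \sum_{x/\omega < n \leq x} \frac1n$, so that $\P(\n = n) = \frac{1}{nS}$ for $n$ in the support of $\n$, and recall from the computation just before the lemma that $S = (1+o_{A\to\infty}(1))\log\omega$; in particular $S \gg \log A$. Unwinding the definitions, the two sides of the desired identity are
$$ \E\bigl( X(\n)\, 1_{\n = r\ (q)} \bigr) = \frac1S \sum_{\substack{x/\omega < n \leq x \\ n = r\ (q)}} \frac{X(n)}{n}, \qquad \frac1q \E\bigl( X(q\n+r) \bigr) = \frac{1}{qS} \sum_{x/\omega < n \leq x} \frac{X(qn+r)}{n}. $$

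First I would substitute $n = qm+r$ in the left-hand sum. The constraint $x/\omega < qm+r \leq x$ becomes $m \in I_1 := \bigl( \tfrac{x/\omega - r}{q}, \tfrac{x-r}{q}\bigr]$, whereas the sum on the right naturally runs over $m \in I_2 := (x/\omega, x]$. Every $m$ that occurs satisfies $qm + r > x/\omega \geq \log x \geq \log A$, and since $q,|r| \leq H_+$ this forces $m \gg (\log A)/H_+ \to \infty$ as $A \to \infty$. Hence $\bigl| \tfrac{1}{qm+r} - \tfrac{1}{qm}\bigr| = O\bigl(\tfrac{|r|}{q^2 m^2}\bigr)$, and summing this discrepancy against $|X| = O(1)$ and dividing by $S$ contributes only $o_{A\to\infty}(1)$ (the tail $\sum_{m \gg (\log A)/H_+} m^{-2}$ is $O(H_+/\log A)$, which after the factor $1/S$ is $o_{A\to\infty}(1)$). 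Therefore
$$ \E\bigl( X(\n)\, 1_{\n = r\ (q)} \bigr) = \frac{1}{qS} \sum_{m \in I_1} \frac{X(qm+r)}{m} + o_{A\to\infty}(1). $$

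It remains to swap the range $I_1$ for $I_2$. These two intervals share the large common sub-interval $(x/\omega, x/q]$, and their left (resp.\ right) endpoints differ by a factor that is $\tfrac1q(1+o_{A\to\infty}(1))$ because $|r| \ll x/\omega \leq x$; consequently $(I_1 \cup I_2)\setminus(I_1\cap I_2)$ has logarithmic length $\sum_m \tfrac1m = O(\log q) + O(1) = O(\log H_+)$. Bounding $|X| = O(1)$ and dividing by $qS$, replacing $I_1$ by $I_2$ costs $O\bigl(\tfrac{\log H_+}{qS}\bigr) = O\bigl(\tfrac{\log H_+}{\log A}\bigr) = o_{A\to\infty}(1)$. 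Combining the two approximations and recognising that $\tfrac1S\sum_{x/\omega < m \leq x} \tfrac{X(qm+r)}{m} = \E(X(q\n+r))$ yields the claimed identity; the first assertion then follows by taking $X = 1_P$.

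The only step carrying genuine content is the change of summation range: the substitution $n = qm+r$ transports the sum from the interval $(x/\omega, x]$ to $I_1 \approx (x/(q\omega), x/q]$, and one must know that the resulting discrepancy — of merely logarithmic size $O(\log q)$ — is swamped by $qS$, which is $\geq \tfrac12 q\log\omega \geq \tfrac12\log A \gg \log H_+$. This is precisely where the size hierarchy $A \gg H_+ \geq q, |r|$ and the running hypothesis $\omega \leq x/\log x$ (ensuring $x/\omega \geq \log x \geq \log A$, so that $\n$ is never too small) enter; everything else is routine control of lower-order error terms, and no parity-type obstruction is relevant, as the lemma is a soft consequence of the logarithmic weighting alone.
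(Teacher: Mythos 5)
Your proof is correct and follows essentially the same route as the paper: the substitution $n = qm+r$, the approximation $\tfrac{1}{qm+r} = \tfrac{1}{qm} + o_{A\to\infty}(\tfrac1m)$ (valid since $m \gg (\log A)/H_+ \gg q,|r|$), and the observation that adjusting the summation range costs only logarithmic measure $O(\log q)$, which is negligible against $\log\omega \geq \log A$. The only difference is presentational (you track the interval endpoints explicitly where the paper absorbs this into an $O(\log q)$ remark), so there is nothing to add.
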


Note in particular that this lemma implies the approximate translation invariance $\P( P(\n+r)) = \P(P(\n))+o_{A \to \infty}(1)$ and $\E( X(\n+r) ) = \E( X(\n) ) + o_{A \to \infty}(1)$ for any $r=O(H_+)$.  If we did not perform a logarithmic averaging, then we would still have approximate translation invariance, but we would not necessarily have the more general approximate \emph{affine} invariance, which causes the remainder of our arguments to break down.

\begin{proof} It suffices to prove the latter claim.  The left-hand side can be written as
$$ \frac{1+o_{A \to \infty}(1)}{\log \omega} \sum_{x/\omega < n \leq x: n = r \ (q)} \frac{X( n )}{n}.$$
Making the change of variables $n = qn' + r$, noting that $\frac{1}{n}$ is equal to $\frac{1}{q} \frac{1}{n'} + o_{A \to \infty}(\frac{1}{n'})$ uniformly in $n'$, we can write the previous expression as
$$ \frac{1+o_{A \to \infty}(1)}{\log \omega} \sum_{x/\omega < qn'+r \leq x} \left(\frac{1}{q} \frac{X( qn'+r)}{n'} + o_{A \to \infty}\left( \frac{1}{n'}\right)\right).$$
The net contribution of the $o_{A \to \infty}( \frac{1}{n'})$ term can be seen to be $o_{A \to \infty}(1)$ (recall that $A$ is assumed large compared to $H_+$ and hence with $q$).  The constraint $x/\omega < qn'+r \leq x$ can be replaced with $x/\omega < n' \leq x$ while incurring an error of $O( \frac{1+o_{A \to \infty}(1)}{\log \omega} O(\log q)) = o_{A \to \infty}(1)$.  The claim follows.
\end{proof}

We now give a simple application of the above lemma.
By Fourier expansion (or by positivity) we may insert the constraint $1_{a|\n}$ in the left-hand side of \eqref{hq} (recalling that $H_-$ is assumed sufficiently large depending on $a$), and thus by Lemma \ref{linear} we also have
\begin{equation}\label{hq-a}
\sup_\alpha \E  \left|\sum_{j=1}^H g_1(a\n+j) e(\alpha j)\right| = o_{H_- \to \infty}(H). 
\end{equation}
This estimate will be useful later in the argument.

From Lemma \ref{linear} and \eqref{face} we have
\begin{equation}\label{face-2}
 |\E 1_{\n=b \ (a)} g_1(\n) g_2(\n+h)| \gg \eps.
\end{equation}
Crucially, we can exploit the multiplicativity of $g_1,g_2$ at medium-sized primes to average this lower bound by further application of Lemma \ref{linear}:

\begin{proposition}\label{conv}  Assume that the bound \eqref{face-2} holds.  Let $H_- \leq H \leq H_+$.  Let ${\mathcal P}_H$ denote the set of primes between $\frac{\eps^2}{2} H$ and $\eps^2 H$. For each prime $p$, let $c_p \in S^1$ denote the coefficient $c_p := \overline{g_1}(p) \overline{g_2}(p)$.  Then one has
\begin{equation}\label{hph}
\left| \E \sum_{p \in {\mathcal P}_H} \sum_{j: j,j+ph \in [1,H]} c_p 1_{a\n+j = pb \ (ap)} g_1(a\n+j) g_2(a\n+j+ph) \right| \gg \eps \frac{H}{\log H}.
\end{equation}
\end{proposition}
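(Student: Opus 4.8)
The plan is to expand the left-hand side of \eqref{hph} as a sum over admissible pairs $(p,j)$ and to identify each individual term, via complete multiplicativity of $g_1,g_2$ at the prime $p$ together with the approximate affine invariance of Lemma \ref{linear}, as essentially $1/p$ times the single fixed complex number $z_0 := \E g_1(a\n+b)g_2(a\n+b+h)$ appearing in \eqref{face}. Summing the resulting main terms over all admissible $(p,j)$ --- which reinforce rather than cancel, since each contributes the same scalar $z_0$ times a positive weight --- will produce the lower bound $\gg\eps\frac{H}{\log H}$.

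First I would rewrite the summand. Since every $p\in{\mathcal P}_H$ exceeds $H_-$ and hence is coprime to $a$, the condition $a\n+j\equiv pb\pmod{ap}$ splits into $j\equiv pb\pmod a$ (a constraint on $j$ alone) together with $p\mid a\n+j$; and setting $a\n+j=p\ell$, the former condition is in turn equivalent to $\ell\equiv b\pmod a$. By complete multiplicativity and $|g_1(p)|=|g_2(p)|=1$ we have $c_p\, g_1(a\n+j)\,g_2(a\n+j+ph)=\overline{g_1(p)}\,\overline{g_2(p)}\,g_1(p)g_1(\ell)\,g_2(p)g_2(\ell+h)=g_1(\ell)g_2(\ell+h)$ with $\ell=(a\n+j)/p$, the arguments $\ell,\ell+h$ being genuinely $\geq 1$ because $\n$ is forced to be large ($\n\geq x/\omega\geq\log x\gg p$). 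Thus, for $(p,j)$ admissible in the sense that $1\leq j$, $1\leq j+ph\leq H$ and $j\equiv pb\pmod a$, the $(p,j)$-term of \eqref{hph} equals $\E\bigl(1_{p\mid a\n+j}\,g_1(\ell)\,g_2(\ell+h)\bigr)$, while for $j$ in the wrong residue class mod $a$ the term vanishes identically, so the inner sum in \eqref{hph} is really a sum over admissible $j$ only.

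Next I would invoke Lemma \ref{linear}. The event $p\mid a\n+j$ is the event $\n\equiv r\pmod p$ for the unique $r=r_{p,j}\in\{0,\dots,p-1\}$ solving $ar+j\equiv0\pmod p$, and under the substitution $\n\mapsto p\n'+r$ one has $\ell=a\n'+s$ with $s=s_{p,j}:=(ar_{p,j}+j)/p$. Since $p\leq\eps^2 H\leq H_+$, Lemma \ref{linear} gives $\E\bigl(1_{p\mid a\n+j}\,g_1(\ell)\,g_2(\ell+h)\bigr)=\tfrac1p\,\E\bigl(g_1(a\n+s_{p,j})\,g_2(a\n+s_{p,j}+h)\bigr)+o_{A\to\infty}(1)$. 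Now $0<s_{p,j}<a+H/p=O_{a,\eps}(1)$, and by the previous paragraph $s_{p,j}\equiv\ell\equiv b\pmod a$ (since admissibility forces $j\equiv pb\pmod a$); hence $s_{p,j}=b+at_{p,j}$ with $t_{p,j}=O_{a,b,\eps}(1)\ll H_+$, so that $a\n+s_{p,j}=a(\n+t_{p,j})+b$ and the approximate translation invariance in Lemma \ref{linear} identifies $\E g_1(a\n+s_{p,j})g_2(a\n+s_{p,j}+h)$ with $z_0$ up to an $o_{A\to\infty}(1)$ error. Therefore every admissible $(p,j)$-term equals $z_0/p+o_{A\to\infty}(1)$.

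Finally I would sum over admissible pairs. For each $p\in{\mathcal P}_H$ the admissible $j$ fill a single residue class mod $a$ inside an interval of length $H-|h|p\in[\tfrac12 H,H]$ (using $|h|p\leq\eps^2|h|H\leq\tfrac12 H$), so their number is $\tfrac{H-|h|p}{a}+O(1)$, which is $\gg_a H$ and $\ll_a H$. Since the $(p,j)$-term is $z_0$ times the positive weight $1/p$, there is no cancellation and $\sum_{p\in{\mathcal P}_H}\sum_{j\ \mathrm{adm.}}z_0/p=z_0\sum_{p\in{\mathcal P}_H}\frac{\#\{j\ \mathrm{adm.}\}}{p}$, with $\sum_{p\in{\mathcal P}_H}\frac{\#\{j\ \mathrm{adm.}\}}{p}\asymp_a\frac{|{\mathcal P}_H|}{\eps^2}\asymp\frac{H}{\log H}$ by the prime number theorem ($|{\mathcal P}_H|\asymp\eps^2 H/\log H$, valid since $H\geq H_-$ is large in terms of $\eps$). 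As $|z_0|\gg\eps$ by \eqref{face}, the main term is $\gg\eps\frac{H}{\log H}$; and the accumulated error is at most $\#\{\text{admissible }(p,j)\}\cdot o_{A\to\infty}(1)\leq H^2\,o_{A\to\infty}(1)=o_{A\to\infty}(1)$, which is negligible against $\eps\frac{H}{\log H}\geq\eps\frac{H_-}{\log H_+}$ because $A$ is chosen large depending on $H_+\geq H$; this yields \eqref{hph}. There is no conceptual obstacle here --- the engine is simply $g_i(p\ell)=g_i(p)g_i(\ell)$ fed into Lemma \ref{linear} --- but care is needed in the bookkeeping of the two moduli $a$ and $p$: one must check that dividing out $p$ leaves the shift in the class $b\pmod a$ (so that translation invariance rebuilds $z_0$ \emph{exactly}, with no new residue constraint), that the shift $t_{p,j}$ stays $O_{a,b,\eps}(1)\ll H_+$, and that the count of admissible pairs is of the correct order $\asymp_a \eps^2H^2/\log H$ so that the summed main term indeed dominates the summed $o_{A\to\infty}(1)$ errors.
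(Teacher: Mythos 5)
Your proposal is correct, and it runs on the same two engines as the paper's proof --- complete multiplicativity of $g_1,g_2$ at the primes $p\in{\mathcal P}_H$, the approximate affine invariance of Lemma \ref{linear}, and the prime number theorem for $|{\mathcal P}_H|$ --- but you run the computation in the opposite direction, which genuinely changes the bookkeeping. The paper starts from \eqref{face-2}, multiplies the arguments by $p$ so that the indicator $1_{p\n=pb\ (ap)}$ appears, redistributes over the shifts $j$ by Lemma \ref{linear}, and only afterwards reinstates the dilation by $a$; since its indicator $1_{\n+j=pb\ (ap)}$ pins down $\n$ modulo $a$ in a $j$-dependent way, the paper needs the intermediate quantities $Q(s)$ and the fluctuation bound $Q(s+1)=Q(s)+O(1/p)$ to extract the class $\n\equiv 0\ (a)$, followed by one final application of Lemma \ref{linear} to pass from $1_{\n=0\ (a)}$ to $a\n$. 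You instead evaluate each admissible $(p,j)$-term of \eqref{hph} directly: because $a\n+j\equiv pb\ (ap)$ constrains $j$ modulo $a$ rather than $\n$, dividing out $p$ and applying Lemma \ref{linear} with modulus $p$ produces a shift $s_{p,j}\equiv b\ (a)$ of size $O_{a,b,\eps}(1)$, and translation invariance returns the single number $z_0$; the residue-averaging step disappears, and the main terms $z_0/p$ add with positive weights, so no cancellation can occur. Your error accounting ($O(H_+^2)$ applications of Lemma \ref{linear}, each costing $o_{A\to\infty}(1)$) sits squarely within the paper's convention that $O_{H_+}(1)\cdot o_{A\to\infty}(1)=o_{A\to\infty}(1)$. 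Two cosmetic points: the stated hypothesis is \eqref{face-2}, so you should obtain $|z_0|\gg\eps$ from it by one more use of Lemma \ref{linear} (with $q=a$, $r=b$) rather than quoting \eqref{face}; doing so in fact gives $|z_0|\gg a\eps$, which removes the $a$-dependence from your final implied constant --- though an $a,h$-dependent constant would in any case be harmless for the way \eqref{hph} is used later in the paper.
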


We remark that in the Liouville case $g_1=g_2=\lambda$ (and also in the case $g_2 = \overline{g_1}$ required in the Erd\H{o}s discrepancy problem application in \cite{tao-erd}), we have $c_p=1$ for all $p$.  This leads to some minor simplification in the arguments (in particular, we only need to apply Proposition \ref{mrp} for ``major arc'' values of $\alpha$, allowing one to replace \cite[Lemma 2.2, Theorem 2.3]{mrt} by the simpler \cite[Theorem A.1]{mrt}), however it turns out that existing results in the literature (in particular, the restriction theorem for the primes in \cite{gt-selberg}) allow us to handle the extension to more general $c_p$ without much additional difficulty.

A key point here is that Proposition \ref{conv} applies for \emph{all} scales $H$ in the range $[H_-,H_+]$.  This is because we will not be able to compute the left-hand side of \eqref{hph} for any specified $H$; however, the ``entropy decrement argument'' we will use in the next section will locate (basically thanks to the pigeonhole principle) a single scale $H$ in the range $[H_-,H_+]$ for which the left-hand side of \eqref{hph} can be evaluated, at which point we can apply the above proposition.  The inability to specify the scale $H$ in advance is a key reason why we were unable to remove the logarithmic averaging from our final result in Theorem \ref{elliott}.

\begin{proof}  Write
$$ X :=  \E 1_{\n=b \ (a)} g_1(\n) g_2(\n+h), $$
thus \eqref{face-2} tells us that $|X| \gg \eps$.  From complete multiplicativity and the definition of $c_p$ we see that
$$
1_{\n=b \ (a)} g_1(\n) g_2(\n+h) = c_p 1_{p\n = pb \ (ap)} g_1(p\n) g_2(p\n+ph) $$
and thus
\begin{equation}\label{toc-2}
\E c_p 1_{p\n = pb \ (ap)} g_1(p\n) g_2(p\n+ph) = X
\end{equation}
for any $p \in {\mathcal P}_H$.
We now claim that
\begin{equation}\label{toc}
 \E c_p 1_{\n+j = pb \ (ap)} g_1(\n+j) g_2(\n+j+ph) = \frac{1}{p} X + o_{A \to \infty}(1)
\end{equation}
for any $1 \leq j \leq H$ and any $p \in {\mathcal P}_H$.  To see this, we split $1_{\n+j=pb\ (ap)}$ as $1_{\n = -j\ (p)} 1_{\n+j = pb\ (a)}$ and apply Lemma \ref{linear} to write the left-hand side of \eqref{toc} as
$$ \frac{1}{p} \E c_p 1_{p\n = pb\ (a)} g_1(p\n) g_2(p\n+ph) + o_{A \to \infty}(1);$$
since $1_{p\n = pb\ (a)} = 1_{p\n = pb\ (ap)}$, the claim now follows from \eqref{toc-2}.

Summing \eqref{toc} over $j=1,\dots,H$, we have
\begin{equation}\label{toc-3}
 \E c_p \sum_{j=1}^H 1_{\n+j = pb \ (ap)} g_1(\n+j) g_2(\n+j+ph) = \frac{1}{p} H X + o_{A \to \infty}(1).
\end{equation}
Now let us introduce the quantities
\begin{equation}\label{ssa}
Q(s) := \E c_p \sum_{j=1}^H 1_{\n+j = pb \ (ap)} g_1(\n+j) g_2(\n+j+ph) 1_{\n = s \ (a)} 
\end{equation}
for $s \in \Z/a\Z$.  From \eqref{toc-3} we have
\begin{equation}\label{judo}
\sum_{s \in \Z/a\Z} Q(s) =  \frac{1}{p} H X + o_{A \to \infty}(1).
\end{equation}

Now let us compare $Q(s)$ with $Q(s+1)$.  Using Lemma \ref{linear} to replace $\n$ with $\n+1$, we see that
\begin{align*}
Q(s+1) &=  \E c_p \sum_{j=1}^H 1_{\n+1+j = pb \ (ap)} g_1(\n+1+j) g_2(\n+1+j+ph) 1_{\n+1 = s+1 \ (a)} + o_{A \to \infty}(1)\\
&=  \E c_p \sum_{j=2}^{H+1} 1_{\n+j = pb \ (ap)} g_1(\n+j) g_2(\n+j+ph) 1_{\n = s \ (a)} + o_{A \to \infty}(1).
\end{align*}
Note that the difference between $ \sum_{j=2}^{H+1} 1_{\n+j = pb \ (ap)} g_1(\n+j) g_2(\n+j+ph)$ and $\sum_{j=1}^{H} 1_{\n+j = pb \ (ap)} g_1(\n+j) g_2(\n+j+ph)$ is zero with probability $1-O(1/p)$, and is $O(1)$ in the remaining event.  Absorbing the $o_{A \to \infty}(1)$ error in the $O(1/p)$ error, we conclude that
$$ Q(s+1) = Q(s) + O(1/p)$$
for all $s \in \Z/a\Z$.  Thus $Q$ fluctuates by at most $O(a/p)$, and in particular
$$ Q(0) = \frac{1}{a} \sum_{s \in \Z/a\Z} Q(s) + O(a/p).$$
Combining this with \eqref{judo}, we conclude that
$$ \E c_p \sum_{j=1}^H 1_{\n+j = pb \ (ap)} g_1(\n+j) g_2(\n+j+ph) 1_{\n = 0 \ (a)} = \frac{H X}{ap}  + O\left(\frac{a}{p}\right).$$
Summing over ${\mathcal P}_H$, we conclude that
$$
 \E \sum_{j=1}^H \sum_{p \in {\mathcal P}_H} c_p 1_{\n+j = pb \ (ap)} g_1(\n+j) g_2(\n+j+ph) 1_{\n=0 \ (a)} = \left(\frac{HX}{a} + O(a)\right) \sum_{p \in {\mathcal P}_H} \frac{1}{p} 
$$
and hence by the prime number theorem and the lower bound $|X| \gg \eps$, one has
$$
\left| \E \sum_{j=1}^H \sum_{p \in {\mathcal P}_H} c_p 1_{\n+j = pb \ (ap)} g_1(\n+j) g_2(\n+j+ph) 1_{\n=0 \ (a)} \right| \gg \eps \frac{H}{a \log H}.$$
Applying Lemma \ref{linear}, we obtain 
$$
\left| \E \sum_{j=1}^H \sum_{p \in {\mathcal P}_H} c_p 1_{a\n+j = pb \ (ap)} g_1(a\n+j) g_2(a\n+j+ph) \right| \gg \eps \frac{H}{\log H}.$$
If $j+ph$ lies outside of the interval $[1,H]$, then $j$ lies in either $[1,|h| \eps^2 H]$ or $[(1-|h| \eps^2) H, H]$.  The contribution of these values of $j$ can be easily estimated to be $O( \sum_{p \in {\mathcal P}_H} \frac{|h| \eps^2 H}{p}) = O_h( \eps^2 \frac{H}{\log H} )$, so from the smallness of $\eps$ we may discard these intervals and conclude the claim.
\end{proof}

We will shortly need to deploy the theory of Shannon entropy, at which point we encounter the inconvenient fact that $g$ could potentially take an infinite number of values and thus have unbounded Shannon entropy.  To get around this, we perform a standard discretisation.  Namely, define $g_{i,\eps^2}(n)$ for $i=1,2$ to be $g_i(n)$ rounded to the nearest element of the lattice $\eps^2 \Z[i]$, where $\Z[i]$ denotes the Gaussian integers.  (We break ties arbitrarily.) This function is no longer multiplicative, but it takes at most $O_\eps(1)$ values, it is bounded in magnitude by $O(1)$, and we have $g_{i,\eps^2} = g_i + O(\eps^2)$ for $i=1,2$.  Thus from the above proposition and the triangle inequality, we have
$$\left| \E \sum_{p \in {\mathcal P}_H} c_p \sum_{j: j, j+ph \in [1,H]} 1_{a\n+j = pb \ (ap)} g_{1,\eps^2}(a\n+j) g_{2,\eps^2}(a\n+j+ph) \right| \gg \eps \frac{H}{\log H}$$
since the error incurred by replacing $g_i$ with $g_{i,\eps^2}$ can be computed to be $O_a( \eps^2 \sum_{p \in {\mathcal P}_H} \frac{H}{p} ) = O_a( \eps^2 \frac{H}{\log H} )$.
We rewrite this inequality as
\begin{equation}\label{efy-large}
 |\E F(\X_H, \Y_H) | \gg \eps \frac{H}{\log H}
\end{equation}
where $\X_H$ is the discrete random variable
$$ \X_H := (g_{i,\eps^2}(a\n+j))_{i=1,2; j=1,\dots,H}$$
(taking values in $(\eps^2 \Z[i])^{2H}$),
$\Y_H$ is the random variable
$$ \Y_H := \n \ (P_H)$$
(taking values in $\Z/P_H\Z$) where $P_H:= \prod_{p \in {\mathcal P}_H} p$, and $F \colon (\eps^2 \Z[i])^{2H} \times \Z/P_H\Z \to \C$ is the function
\begin{equation}\label{faxy}
F( (x_{i,j})_{i=1,2; j=1,\dots,H}, y \ (P_H) ) := \sum_{p \in {\mathcal P}_H} c_p \sum_{j: j,j+ph \in [1,H]} 1_{ay+j = pb \ (ap)} x_{1,j} x_{2,j+ph}.
\end{equation}
(Note that the residue class $ay \ (ap)$ is well defined for $p \in \Z/P_H \Z$ and $p \in {\mathcal P}_H$, noting that $P_H$ is coprime to $a$.)

It is thus of interest to try to calculate the typical value of $F(\X_H,\Y_H)$.  One can interpret $F(\X_H,\Y_H)$ as a ``bilinear'' expression of the components of $\X_H$ along a certain random graph determined by $\Y_H$.  A key difficulty is that the random variables $\X_H$ and $\Y_H$ are not independent, and could potentially be coupled together in an adversarial fashion.  In this worst case, this would require one to establish a suitable ``expander'' property for the random graph associated to $\Y_H$ that would ensure cancellation in the sum regardless of what values that $\X_H$ will take.  It may well be that such an expansion property\footnote{Actually, to be able to plausibly expect expansion, one should enlarge ${\mathcal P}_H$ to be something like the primes between $H^\delta$ and $\eps^2 H$ for some small $\delta$, so that the average degree of the random graph associated to $\Y_H$ is significantly larger than one.} holds (with high probability, of course). However, we can avoid having to establish such a strong expansion property by taking advantage of an ``entropy decrement argument'' to give some weak independence between $\X_H$ and $\Y_H$ for at least one choice of $H$ between $H_-$ and $H_+$.  Once one obtains such a weak independence, it turns out that one only needs to show that for a typical choice of $\X_H$, that $F(\X_H,\Y_H)$ is small for \emph{most} choices of $\Y_H$, where we allow a (nearly) exponentially small failure set for the $\Y_H$.  This turns out to be much easier to establish than the expander graph property, being obtainable from standard concentration of measure inequalities (such as Hoeffding's inequality), and an application of the Hardy-Littlewood circle method.  

\begin{remark}
The entropy decrement argument we give below can be viewed as a quantitative variant of the construction of the Kolmogorov-Sinai entropy of a topological dynamical system (see e.g. \cite{billingsley}), but we will not explicitly use the language of topological dynamics here.  See however \cite{ab} for a discussion of the Chowla conjecture and its relation to a conjecture of Sarnak \cite{sarnak} from a topological dynamics point of view.  It may well be that the arguments here could also benefit from a more explicit use of topological dynamics machinery.
\end{remark}

\section{The entropy decrement argument}\label{entropy}

We continue the proof of Theorem \ref{elliott}.  We begin by briefly reviewing the basic Shannon inequalities from information theory.

Recall that if $\X$ is a discrete random variable (taking at most countably many values), the \emph{Shannon entropy} $\HH(\X)$ is defined\footnote{In the information theory literature, the logarithm to base $2$ is often used to define entropy, rather than the natural logarithm, in which case $\HH(\X)$ can be interpreted as the number of bits needed to describe $\X$ on the average.  One could use this choice of base in the arguments below if desired, but ultimately the choice of base is a normalisation which has no impact on the final bounds.} by the formula
$$ \HH(\X) := \sum_x \P( \X = x ) \log \frac{1}{\P(\X=x)}$$
where $x$ takes values in the essential range of $\X$ (that is to say, those $x$ for which $\P(\X=x)$ is nonzero).  A standard computation then gives the identity
\begin{equation}\label{haxy}
 \HH(\X,\Y) = \HH(\X|\Y) + \HH(\Y) = \HH(\X) + \HH(\Y|\X)
\end{equation}
for the joint entropy $\HH(\X,\Y)$ of the random variable $(\X,\Y)$,
where the \emph{conditional entropy} $\HH(\X|\Y)$ is defined by the formulae
\begin{equation}\label{xy}
\HH(\X|\Y) := \sum_y \P( \Y=y) \HH( \X|\Y=y )
\end{equation}
(with $y$ ranging over the essential range of $\Y$) and
$$ \HH(\X|\Y=y) := \sum_x \P( \X = x | \Y=y ) \log \frac{1}{\P(\X=x | \Y=y)} $$
with $\P(E|F) := \P( E \wedge F ) / \P(F)$ being the conditional probability of $E$ relative to $F$, and the sum is over the essential range of $\X$ conditioned to $\Y=y$.  From the concavity of the function $x \mapsto x \log \frac{1}{x}$ and Jensen's inequality we have
\begin{equation}\label{hyx}
 \HH(\X|\Y) \leq \HH(\X)
\end{equation}
so we conclude the subadditivity of entropy
\begin{equation}\label{subadd}
 \HH(\X,\Y) \leq \HH(\X)+\HH(\Y).
\end{equation}
If we define the \emph{mutual information}
\begin{equation}\label{mutual}
\I(\X,\Y) := \HH(\X)+\HH(\Y) - \HH(\X,\Y) = \HH(\X) - \HH(\X|\Y) = \HH(\Y) - \HH(\Y|\X)
\end{equation}
between two discrete random variables $\X,\Y$, we thus see that $\I(\X,\Y) = \I(\Y,\X) \geq 0$.

\begin{remark} One can view $\I(\X,\Y)$ as a measure of the extent to which the random variables $\X,\Y$ are not independent.  For instance, one can show that $\I(\X,\Y)=0$ if and only if $\X$ and $\Y$ are jointly independent.  In a similar vein, one can view the conditional entropy $\HH(\X|\Y)$ as a measure of the amount of new information carried by $\X$, given that one already knows the value of $\Y$.
\end{remark}

Conditioning the random variables $\X,\Y$ to an auxiliary discrete random variable $\mathbf{Z}$, we conclude the relative subadditivity of entropy
\begin{equation}\label{subadd-rel}
 \HH(\X,\Y|\mathbf{Z}) \leq \HH(\X|\mathbf{Z})+\HH(\Y|\mathbf{Z}).
\end{equation}
Finally, a further application of Jensen's inequality gives the bound
\begin{equation}\label{jens}
\HH(\X) \leq \log N
\end{equation}
whenever $\X$ takes on at most $N$ values.

Recall the discrete random variables $\X_H, \Y_H$ defined previously.  From \eqref{jens}, \eqref{subadd}, and the fact that each component of $\X_H$ takes on only $O_\eps(1)$ values, we have the upper bound
\begin{equation}\label{bad}
0 \leq \HH( \X_H ) \ll_\eps H.
\end{equation}
Note that $\Y_H$ is within $o_{A \to \infty}(1)$ (in any reasonable metric) of being uniformly distributed on $\Z/P_H\Z$, thus
\begin{equation}\label{hayah}
 \HH(\Y_H) = \log P_H - o_{A \to \infty}(1).
\end{equation}
In particular, from the prime number theorem we have the crude bound
\begin{equation}\label{hyh}
 \HH(\Y_H) \ll H 
\end{equation}
for all $H_- \leq H \leq H_+$.

Let us temporarily define the variant
$$ \X_{H_1,H_1+H_2} := \HH( (g_{i,\eps^2}(\n+j))_{i=1,2; j=H_1+1,\dots,H_1+H_2} $$
of $\X_H$, where $H_1,H_2$ are natural numbers.
From the approximate translation invariance provided by Lemma \ref{linear}, we see that
$$ \HH( \X_{H_1,H_1+H_2} ) =  
\HH( \X_{H_2} ) + o_{A \to \infty}(1)$$
for any $H_1,H_2 \leq H_+$; applying \eqref{subadd}, and noting that $\X_{H_1+H_2}$ is the concatenation of $\X_{H_1}$ and $\X_{H_1,H_1+H_2}$, we obtain the approximate subadditivity property
\begin{equation}\label{xh1}
 \HH( \X_{H_1+H_2} ) \leq \HH(\X_{H_1}) + \HH(\X_{H_2}) + o_{A \to \infty}(1)
\end{equation}
for any natural numbers $H_1,H_2 \leq H_+$.

We can improve this inequality if $\X_H$ shares some mutual information with $\Y_H$, as $\Y_H$ does not generate any entropy upon translation.  Indeed, 
from Lemma \ref{linear} again, we see for any natural numbers $H,H_1,H_2$ between $H_-$ and $H_+$ that
$$ \HH( \X_{H_1,H_1+H_2} | \n+H_1 \ (P_H) ) =  
\HH( \X_{H_2} | \n \ (P_H) ) + o_{A \to \infty}(1).$$
But $\n+H_1 \ (P_H)$ conveys exactly the same information as $\n \ (P_H)$ (they generate exactly the same finite $\sigma$-algebra of events), so
$$ \HH( \X_{H_1,H_1+H_2} | \n+H_1 \ (P_H) ) =  
\HH( \X_{H_1,H_1+H_2}| \n \ (P_H) ).$$
Inserting these identities into \eqref{subadd-rel} and recalling that $\Y_H = \n\ (P_H)$, we obtain the relative approximate subadditivity property
$$ \HH( \X_{H_1+H_2} | \Y_H ) \leq \HH(\X_{H_1} | \Y_H ) + \HH(\X_{H_2} | \Y_H) + o_{A \to \infty}(1)$$
for any $H,H_1,H_2$ between $H_-$ and $H_+$.  Iterating this, we conclude in particular that
$$ \HH( \X_{kH} | \Y_H ) \leq k \HH( \X_H | \Y_H ) + o_{A \to \infty}(1)$$
for any natural numbers $k,H$ with $H_- \leq H \leq kH \leq H_+$ (note that the number of iterations here is at most $H_+$, so that the $o_{A \to \infty}(1)$ error stays under control).  From this and \eqref{haxy},  \eqref{mutual} we see that
\begin{align*}
\HH(\X_{kH}) &= \HH(\X_{kH}|\Y_H) + \HH(\Y_H) - \HH( \Y_H | \X_{kH} ) \\
&\leq \HH(\X_{kH}|\Y_H) + \HH(\Y_H) \\
&\leq k \HH(\X_H | \Y_H) + \HH(\Y_H) + o_{A \to \infty}(1)\\
&= k \HH(\X_H)  - k \I(\X_H,\Y_H) +  \HH(\Y_H) + o_{A \to \infty}(1)
\end{align*}
which on dividing by $kH$ and using \eqref{hyh} gives
\begin{equation}\label{splat}
\frac{\HH(\X_{kH})}{kH} \leq \frac{\HH(\X_H)}{H} - \frac{\I(\X_H,\Y_H)}{H} + O\left( \frac{1}{k} \right),
\end{equation}
whenever $H_- \leq H \leq kH \leq H_+$ (note that we can absorb the $o_{A \to \infty}(1)$ error in the $O( 1/k)$ term since $k \leq H_+$).  This can be compared with the inequality 
$$ \frac{\HH(\X_{kH})}{kH} \leq \frac{\HH(\X_H)}{H} + o_{A \to \infty}(1)$$
under the same hypotheses on $H,k$, coming from iterating \eqref{xh1}.  Thus we see that the presence of mutual information between $\X_H$ and $\Y_H$ causes a decrement in the entropy rate of $\X_H$ as one increases $H$.

We can iterate this inequality and use an ``entropy decrement argument'' to get a non-trivial upper bound on the mutual information $\I(\X_H,\Y_H)$ for some large $H$:

\begin{lemma}[Entropy decrement argument]  There exists a natural number $H$ between $H_-$ and $H_+$, which is a multiple of $a$, and such that
$$ \I( \X_H, \Y_H ) \leq \frac{H}{\log H \log\log\log H}.$$
\end{lemma}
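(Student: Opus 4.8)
The plan is to argue by contradiction. Suppose that $\I(\X_H,\Y_H) > \frac{H}{\log H \log\log\log H}$ for \emph{every} multiple $H$ of $a$ with $H_- \le H \le H_+$; I will derive a contradiction by iterating the entropy-decrement inequality \eqref{splat} along a suitably chosen increasing chain of such scales, exploiting that $\HH(\X_H)/H$ is nonnegative and, by \eqref{bad}, bounded by $O_\eps(1)$ at the bottom of the range.

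First, fix a starting scale $H_1$ to be a multiple of $a$ with $H_- \le H_1 \le 2H_-$ (possible since $H_-$ is large compared to $a$). Given a scale $H_i$ that is a multiple of $a$ with $H_- \le H_i \le H_+$, set $k_i := \lceil C_0 \log H_i \log\log\log H_i \rceil$, where $C_0$ is an absolute constant chosen large enough that the $O(1/k)$ error term in \eqref{splat}, with $H=H_i$ and $k=k_i$, is at most $\tfrac{1}{2\log H_i\log\log\log H_i}$; then put $H_{i+1} := k_i H_i$, which is again a multiple of $a$, and note $k_i \ge 2$, so the chain strictly increases. Continue as long as $H_{i+1} \le H_+$, obtaining scales $H_1 < H_2 < \dots < H_N \le H_+ < H_{N+1}$; crudely $N \le \log_2 H_+ \ll H_+$, so the $o_{A\to\infty}(1)$ losses already absorbed into \eqref{splat} remain harmless over all the iterations (recall $A$ was chosen last, huge compared to $H_+$).

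Now telescope. Applying \eqref{splat} with $H=H_i$, $k=k_i$ (legitimate since $H_- \le H_i \le k_i H_i = H_{i+1} \le H_+$), and using the contradiction hypothesis $\I(\X_{H_i},\Y_{H_i})/H_i > \frac{1}{\log H_i\log\log\log H_i}$ together with the choice of $k_i$, one gets at each step
$$\frac{\HH(\X_{H_{i+1}})}{H_{i+1}} \le \frac{\HH(\X_{H_i})}{H_i} - \frac{1}{2\log H_i\log\log\log H_i}.$$
Summing over $i=1,\dots,N-1$, discarding the nonnegative term $\HH(\X_{H_N})/H_N \ge 0$, and using $\HH(\X_{H_1})/H_1 \ll_\eps 1$ from \eqref{bad}, we arrive at
$$\sum_{i=1}^{N-1} \frac{1}{\log H_i \log\log\log H_i} \ll_\eps 1.$$
It remains to contradict this by showing the left-hand side grows without bound as $H_+$ grows (with $H_-,\eps$ fixed). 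Since $\log H_{i+1}-\log H_i = \log k_i$ is $\asymp \log\log H_i$, the points $u_i:=\log H_i$ run from $\asymp\log H_-$ up to $\asymp\log H_+$ with consecutive gaps $\asymp\log u_i$, so they have density $\asymp 1/\log u$, and comparing the sum (whose $i$-th term equals $\frac{1}{u_i\log\log u_i}$) with $\int \frac{du}{u\log u\log\log u} = \log\log\log u + O(1)$ shows the partial sum is $\asymp \log\log\log\log H_+ - \log\log\log\log H_- \to \infty$. Choosing $H_+$ large enough in terms of $H_-$ and $\eps$ makes it exceed the $O_\eps(1)$ bound above, the desired contradiction.

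The main point requiring care is the calibration of the multiplicative steps $k_i$: they must grow at least like $\log H_i \log\log\log H_i$ to absorb the $O(1/k)$ loss in \eqref{splat}, but this very requirement fixes the sparsity of the chain, and one has to check that the threshold $\frac{1}{\log H\log\log\log H}$ in the statement lies on the \emph{divergent} side of that trade-off — i.e.\ that $\sum_i \frac{1}{\log H_i\log\log\log H_i}$ still diverges despite the forced gaps. (With a threshold like $\frac{1}{(\log H)^2}$ the corresponding sum would converge and the scheme would collapse; the stated threshold is small enough to be useful in the sequel yet still on the right side of this dichotomy.) Everything else — producing the chain, keeping the iteration count and error terms under control, telescoping, and invoking \eqref{bad} — is routine bookkeeping.
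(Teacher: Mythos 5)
Your proof is correct and follows essentially the same route as the paper: assume the bound fails for all admissible $H$, iterate \eqref{splat} along a chain of scales with multiplicative steps $\asymp C_0\log H\log\log\log H$, telescope against the $O_\eps(1)$ bound from \eqref{bad}, and derive a contradiction from the divergence of the resulting series. The only difference is cosmetic bookkeeping: you establish divergence by an integral comparison giving $\asymp \log\log\log\log H_+$, while the paper bounds $H_j \leq \exp(Bj\log j)$ and uses divergence of $\sum_j 1/(j\log j\log\log j)$ — its footnote even notes a variant close to yours.
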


As we shall see later, the key point here is that this bound is not only better than the trivial bound of $O(H)$ coming from \eqref{hyh}, but is (barely!) smaller than $H/\log H$ in the limit as $H \to \infty$; in particular, the mutual information between $\X_H$ and $\Y_H$ is smaller than the number $|{\mathcal P}_H|$ of primes one is using to define $F(\X_H,\Y_H)$.  One may think of this lemma as providing a weak independence between $\X_H$ and $\Y_H$ for certain large $H$.  For the purposes of optimising the bounds, it appears to be slightly more efficient to prove a variant of this lemma in which the right-hand side is of the form $\eps^{10} \frac{H}{\log H}$ (say); we leave the details to the interested reader.

\begin{proof} Suppose for sake of contradiction that one has
$$ \I( \X_H, \Y_H ) > \frac{H}{\log H \log\log\log H}$$
for all $H_- \leq H \leq H_+$ that are multiples of $a$.   Let $C_0$ be a sufficiently large natural number depending on $H_-$, and let $J$ be a sufficiently large natural number depending on $C_0,H_-,\eps$.  We may assume that $H_+$ is sufficiently large depending on $H_-, C_0, J$.  The idea is to now repeatedly use \eqref{splat} to decrement the entropy ratio $\frac{\HH(\X_H)}{H}$ as $H$ increases, until one arrives at the absurd situation of a random variable with negative entropy.

Let us recursively define the natural numbers $H_- \leq H_1 \leq H_2 \leq \dots \leq H_J$ by setting $H_1 := a H_-$ and
$$ H_{j+1} := H_j \lfloor C_0 \log H_j \log\log\log H_j \rfloor $$
for all $1 \leq j < J$.  Note that if $H_+$ is sufficiently large depending on $H_-,C_0,J$, then all the $H_j$ will lie between $H_-$ and $H_+$ and are multiples of $a$.  For $C_0$ large enough, we see from \eqref{splat} with $H,k$ replaced by $H_j$ and $\lfloor C_0 \log H_j \log\log\log H_j \rfloor$ respectively, followed by \eqref{jens}, that
$$\frac{\HH(\X_{H_{j+1}})}{H_{j+1}} \leq  \frac{\HH(\X_{H_j})}{H_j} - \frac{1}{2 \log H_j \log\log\log H_j}$$
for all $1 \leq j < J$.  (The $o_{A \to \infty}(1)$ error may be absorbed as we are assuming $A$ to be large.)
On the other hand, an easy induction\footnote{Alternatively, one can proceed by noting that for any given $T \geq H_-$, there are $\gg \frac{\log T}{\log\log T}$ values of $H_j$ between $T$ and $T^2$ if $J$ is large enough, which is sufficient to get some divergence in $\sum_{j=1}^J \frac{1}{2 \log H_j \log\log\log H_j}$ as $J \to \infty$.} 
 shows that there exists $B \geq 10^{10}$ (depending on $C_0,H_-$) such that
$$ H_j \leq \exp( B j \log j ) $$
for all $2 \leq j \leq J$.  Thus we have
$$\frac{\HH(\X_{H_{j+1}})}{H_{j+1}} \leq  \frac{\HH(\X_{H_j})}{H_j} - \frac{1}{2B j \log j \log\log(Bj\log j)}$$
for all $2 \leq j \leq J$, which on telescoping using \eqref{bad} gives the bound
$$ \sum_{j=2}^J \frac{1}{2Bj \log j \log\log(Bj\log j)} \ll_\eps 1.$$
But the sum on the left-hand side diverges (very slowly!) in the limit $J \to \infty$, and so we obtain a contradiction by choosing $J$ (and then $H_+$) large enough.
\end{proof}

From the above lemma we can find an $H$ between $H_-$ and $H_+$ that is a multiple of $a$, such that
\begin{equation}\label{ixy}
 \I( \X_H, \Y_H ) = o_{H_- \to \infty}\left(\frac{H}{\log H}\right).
\end{equation}
Fix this value of $H$.  From \eqref{mutual} and \eqref{ixy} we have
$$ \sum_x \P( \X_H = x ) \left(\HH(\Y_H) - \HH(\Y_H|\X_H=x)\right) =  o_{H_- \to \infty}\left(\frac{H}{\log H}\right).$$
By \eqref{jens}, \eqref{hayah}, the summands are bounded below by $-o_{A \to \infty}(1)$.  Thus, if we call a value $x$ \emph{good} if one has
\begin{equation}\label{sab}
 \HH(\Y_H) - \HH(\Y_H|\X_H=x) =  o_{H_- \to \infty}\left(\frac{H}{\log H}\right),
\end{equation}
we see from Markov's inequality that the random variable $\X_H$ will attain a good value with probability $1-o_{H_- \to \infty}(1)$.

Informally, if $x$ is good, then $\Y_H$ remains somewhat uniformly distributed across $\Z/P_H\Z$ even after one conditions $\X_H$ to equal $x$, in the sense that this conditioned random variable cannot concentrate too much mass into a small region.  More precisely, we have

\begin{lemma}[Weak uniform distribution]\label{weak-unif}    Let $x$ be a good value.
Let $E_x$ be a subset of $\Z/P_H\Z$ (which can depend on $x$) of cardinality
$$ |E_x| \leq \exp\left( - \eps^7 \frac{H}{\log H} \right) P_H.$$
Then one has
$$ \P( \Y_H \in E_x | \X_H = x ) = o_{H_- \to \infty}( 1 ).$$
\end{lemma}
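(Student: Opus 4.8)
The plan is to prove this by a direct manipulation of Shannon entropy, exploiting the fact that a random variable supported on $\Z/P_H\Z$ whose entropy is within $o(H/\log H)$ of the maximal possible value $\log P_H$ cannot concentrate an appreciable amount of mass on a set of size only $\exp(-\eps^7 H/\log H)P_H$. Concretely, I would set $\delta := \P(\Y_H \in E_x \mid \X_H = x)$ and aim to show $\delta = o_{H_- \to \infty}(1)$, with the bound uniform over good values $x$ and over the admissible sets $E_x$.

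For the key steps: first I would pass to the probability space conditioned on the event $\X_H = x$ (legitimate since $x$ lies in the essential range, so $\P(\X_H = x) > 0$) and introduce the Boolean random variable $\mathbf{B} := 1_{\Y_H \in E_x}$, which is a deterministic function of $\Y_H$. The chain rule \eqref{haxy}, applied in the conditioned space, then gives $\HH(\Y_H \mid \X_H = x) = \HH(\mathbf{B} \mid \X_H = x) + \HH(\Y_H \mid \mathbf{B}, \X_H = x)$, using that $\HH(\Y_H, \mathbf{B} \mid \X_H = x) = \HH(\Y_H \mid \X_H = x)$. Next I would bound the two terms on the right: $\HH(\mathbf{B} \mid \X_H = x) \leq \log 2$ by \eqref{jens} since $\mathbf{B}$ is Boolean; and expanding the second term via the definition \eqref{xy} of conditional entropy and applying \eqref{jens} to each of the two resulting pieces — the piece conditioned on $\Y_H \in E_x$ takes at most $|E_x|$ values, the other at most $P_H$ values — yields $\HH(\Y_H \mid \mathbf{B}, \X_H = x) \leq \delta \log|E_x| + (1-\delta)\log P_H$. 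Meanwhile, the hypothesis that $x$ is good together with \eqref{hayah} gives the lower bound $\HH(\Y_H \mid \X_H = x) = \log P_H - o_{H_- \to \infty}(H/\log H)$, the $o_{A \to \infty}(1)$ term from \eqref{hayah} being absorbed into the coarser error.

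Assembling these inequalities gives $\log P_H - o_{H_- \to \infty}(H/\log H) \leq \log 2 + \delta \log|E_x| + (1-\delta)\log P_H$, which rearranges to $\delta\bigl(\log P_H - \log|E_x|\bigr) \leq \log 2 + o_{H_- \to \infty}(H/\log H)$. The size hypothesis $|E_x| \leq \exp(-\eps^7 H/\log H) P_H$ converts the left-hand factor into the lower bound $\log P_H - \log|E_x| \geq \eps^7 H/\log H$, so dividing through gives $\delta \leq \frac{\log 2}{\eps^7 H/\log H} + o_{H_- \to \infty}(1)$; since $\eps$ is fixed and $H \geq H_-$, both terms on the right are $o_{H_- \to \infty}(1)$, which finishes the argument.

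As for the main obstacle: there is no substantive one — the lemma is a clean consequence of the Shannon inequalities already recorded in the previous section — and the only point demanding care is the bookkeeping of error terms. One must check that the $o_{A \to \infty}(1)$ arising from \eqref{hayah} is dominated by the $o_{H_- \to \infty}(H/\log H)$ arising from goodness (which holds because $A$ is chosen last and largest), and that dividing an $o_{H_- \to \infty}(H/\log H)$ numerator by the fixed quantity $\eps^7 H/\log H$ still yields an $o_{H_- \to \infty}(1)$ bound — precisely the place where it matters that the exponent $\eps^7$ in the hypothesis on $|E_x|$ does not depend on $H_-$.
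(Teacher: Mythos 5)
Your argument is correct and is essentially the paper's own proof: both condition on the event $\X_H = x$, apply the chain rule \eqref{haxy} with the Boolean variable $1_{E_x}(\Y_H)$ (whose entropy is absorbed as $\log 2$), split $\HH(\Y_H \mid 1_{E_x}(\Y_H), \X_H = x)$ via \eqref{xy} and bound the two pieces by $\log|E_x|$ and $\log P_H$ using \eqref{jens}, and then combine the goodness bound \eqref{sab} with \eqref{hayah} and the size hypothesis on $E_x$ to conclude $\P(\Y_H \in E_x \mid \X_H = x) = o_{H_- \to \infty}(1)$. The only cosmetic difference is that you bound the $\Y_H \notin E_x$ piece by $\log P_H$ directly where the paper cites \eqref{hyx} to bound it by $\HH(\Y_H)$, which is the same thing up to the $o_{A \to \infty}(1)$ in \eqref{hayah}.
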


The quantity $\eps^7$ here could be replaced by any other function of $\eps$, but we use this particular choice to match with Lemma \ref{h} below.

\begin{proof}  Applying \eqref{haxy} (conditioned to the event $\X_H = x$) we have
\begin{align*}
 \HH( \Y_H | \X_H = x, 1_{E_x}(\Y_H) ) &= \HH( \Y_H | \X_H = x ) + \HH( 1_{E_x}(\Y_H) | \Y_H, \X_H=x ) \\
&\quad - \HH(1_{E_x}(\Y_H) |  \X_H = x) \\
&\geq \HH( \Y_H | \X_H = x ) - \HH(1_{E_x}(\Y_H) | \X_H = x).
\end{align*}
By \eqref{xy} (again conditioned to the event $\X_H = x$), the left-hand side may be expanded as
\begin{align*}
&\P( \Y_H \in E_x | \X_H = x ) \HH( \Y_H | \X_H=x, \Y_H \in E_x ) \\
&\quad+ \P( \Y_H \not \in E_x | \X_H = x ) \HH( \Y_H | \X_H = x, \Y_H \not \in E_x) 
\end{align*}
and thus by \eqref{sab}
\begin{align*}
&\P( \Y_H \in E_x | \X_H = x ) \HH( \Y_H | \X_H=x, \Y_H \in E_x ) \\
&\quad+ \P( \Y_H \not \in E_x | \X_H = x ) \HH( \Y_H | \X_H = x, \Y_H \not \in E_x) \\
&\quad\quad \geq \HH(\Y_H) -  \HH(1_{E_x}(\Y_H) | \X_H = x) - o_{H_- \to \infty}\left( \frac{H}{\log H} \right).
\end{align*}
By \eqref{jens}, $\HH(1_{E_x}(\Y_H) | \X_H = x)$ is bounded by $\log 2$ and so this term can be absorbed in the $ o_{H_- \to \infty}(H/\log H)$ error.
From \eqref{hyx} we have
$$ \HH( \Y_H | \X_H = x, \Y_H \not \in E_x) \leq \HH(\Y_H)$$
and hence
$$ \P( \Y_H \in E_x | \X_H = x ) \left(\HH(\Y_H) - \HH( \Y_H | \X_H=x, \Y_H \in E_x ) \right) \leq  
o_{H_- \to \infty}\left(\frac{H}{\log H}\right).$$
But from \eqref{jens} one has
$$ \HH( \Y_H | \X_H=x, \Y_H \in E_x ) \leq \log |E_x| \leq \log P_H - \eps^7 \frac{H}{\log H} $$
and the claim then follows from \eqref{hayah} (recalling that $H_-$ is large depending on $\eps$).
\end{proof}

\begin{remark} Lemma \ref{weak-unif} may also be derived from the data processing inequality
$$ D_{KL}( 1_{E_x}(\Y'_H) || 1_{E_x}(\Y_H) ) \leq D_{KL}( \Y'_H || \Y_H )$$
where $\Y'_H$ is the random variable $\Y_H$ conditioned to the event $\X_H = x$, and where $D_{KL}( \X|| \Y) :=  \sum_x \P( \X = x) \log \frac{\P(\X=x)}{\P(\Y=x)}$ denotes the Kullback-Leibler divergence; we leave the details of this alternate derivation to the interested reader.  (Thanks to Yihong Wu for this observation.)
\end{remark}

We can use this weak uniform distribution to show that $F(\X_H,\Y_H)$ concentrates as a function of $\Y_H$.  We first observe

\begin{lemma}[Hoeffding inequality]\label{h} Let $x$ lie in the range of $\X_H$.  Let $E_x$ denote the set of all $y \in \Z/P_H\Z$ such that
$$ \left|F(x,y) - \frac{1}{P_H} \sum_{y' \in \Z/P_H\Z} F(x,y')\right| \geq \eps^2 \frac{H}{\log H}.$$
Then
$$ |E_x| \leq \exp\left( - \eps^7 \frac{H}{\log H} \right ) P_H.$$
\end{lemma}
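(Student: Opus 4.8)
The plan is to use the Chinese remainder theorem to express $F(x,\cdot)$ as a sum of independent, uniformly bounded random variables indexed by the primes of ${\mathcal P}_H$, and then to apply Hoeffding's inequality in essentially its textbook form.

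First I would note that $|E_x|/P_H$ is precisely the probability $\P(\y \in E_x)$, where $\y$ is a random variable uniformly distributed on $\Z/P_H\Z$, and that $\tfrac{1}{P_H}\sum_{y' \in \Z/P_H\Z} F(x,y') = \E F(x,\y)$. Writing $P_H = \prod_{p \in {\mathcal P}_H} p$ and setting
$$ V_p := c_p \sum_{\substack{1 \leq j,\ j+ph \leq H \\ a\y + j = pb\ (ap)}} x_{1,j}\, x_{2,j+ph},$$
one has $F(x,\y) = \sum_{p \in {\mathcal P}_H} V_p$ by \eqref{faxy}. The key structural observation is that, for each $p \in {\mathcal P}_H$, the residue class $a\y\ (ap)$ is unchanged when $\y$ changes by a multiple of $p$, so that $V_p$ depends on $\y$ only through $\y\ (p)$; since for a uniform $\y$ the residues $(\y\ (p))_{p \in {\mathcal P}_H}$ are jointly independent (and each uniform) by the Chinese remainder theorem, the random variables $(V_p)_{p \in {\mathcal P}_H}$ are jointly independent.

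Next I would bound $|V_p|$ uniformly in $\y$: the congruence $a\y + j = pb\ (ap)$ pins $j$ to a single residue class modulo $ap$, while $1 \leq j, j+ph \leq H$ confines $j$ to an interval of length at most $H$, so --- using $p \geq \tfrac{\eps^2}{2}H$ --- there are at most $\tfrac{H}{ap}+1 = O(1/\eps^2)$ admissible $j$, and each summand $c_p x_{1,j} x_{2,j+ph}$ has magnitude $O(1)$; hence $|V_p| = O(1/\eps^2)$. Applying Hoeffding's inequality to the real and imaginary parts of $F(x,\y) = \sum_p V_p$ separately (each of $\operatorname{Re} V_p$, $\operatorname{Im} V_p$ ranging over a real interval of length $O(1/\eps^2)$), with deviation threshold of order $\eps^2 \tfrac{H}{\log H}$ and with sum of squared oscillations $\sum_{p \in {\mathcal P}_H} O(1/\eps^4) = O(|{\mathcal P}_H|/\eps^4) = O\!\left(\tfrac{H}{\eps^2 \log H}\right)$ --- the last step using the prime number theorem bound $|{\mathcal P}_H| = O(\eps^2 H/\log H)$ --- one obtains
$$ \P(\y \in E_x) \ll \exp\!\left( -c\,\eps^6\,\frac{H}{\log H} \right)$$
for some absolute constant $c > 0$. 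Since $\eps$ is small and $H \geq H_-$ with $H_-$ large depending on $\eps$, the right-hand side is at most $\exp(-\eps^7 \tfrac{H}{\log H})$, which rearranges to the claimed bound $|E_x| \leq \exp(-\eps^7 \tfrac{H}{\log H}) P_H$.

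I do not expect a genuine obstacle here: once the representation $F(x,\y) = \sum_p V_p$ as a sum of independent bounded pieces is recognised, the estimate is a direct application of Hoeffding. The only things to be careful about are the verification that $V_p$ factors through $\y\ (p)$ alone, and the bookkeeping of the powers of $\eps$ so that the resulting exponent $\eps^6$ comfortably beats the target exponent $\eps^7$; both are routine. (The substantive work lies not here but in the companion estimate --- bounding $\tfrac{1}{P_H}\sum_{y'} F(x,y')$ --- which is where the Hardy--Littlewood circle method and the short-sum bound \eqref{hq-a} will be needed.)
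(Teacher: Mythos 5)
Your proof is correct and follows essentially the same route as the paper: the same probabilistic reformulation, the same decomposition $F(x,\y)=\sum_{p\in{\mathcal P}_H}F_p(x,\y)$ with joint independence via the Chinese remainder theorem, the same uniform bound $O(1/\eps^2)$ on each piece, and the same Hoeffding computation yielding an exponent of order $\eps^6 H/\log H$, which beats the target $\eps^7 H/\log H$ for small $\eps$. The only (harmless) refinements you add are the explicit count of admissible $j$ and the splitting into real and imaginary parts, which the paper leaves implicit.
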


\begin{proof}  We interpret this inequality probabilistically.  Let $\y$ be drawn uniformly at random from $\Z/P_H\Z$, then our task is to show that
$$ \P\left( |F(x,\y) - \E F(x,\y)| \geq \eps^2 \frac{H}{\log H} \right) \leq \exp\left( -\eps^7 \frac{H}{\log H} \right ).$$
We can write 
$$ F(x,\y) = \sum_{p \in {\mathcal P}_H} F_p(x,\y)$$
where
\begin{equation}\label{fp-def}
 F_p(x,\y) := c_p \sum_{j: j,j+ph \in [1,H]} 1_{a\y+j = pb \ (ap)} x_{1,j} x_{2,j+ph}.
\end{equation}
Note that the only randomness in the quantity $F_p(x,\y)$ comes from the reduction $\y\ (p)$ of $\y$ modulo $p$.  Since $\y$ is uniformly distributed in $\Z/P_H\Z$, we see from the Chinese remainder theorem that the $\y\ (p)$ are uniformly distributed in $\Z/p\Z$ and are jointly independent in $p$.  As each $F_p(x,\y)$ is a deterministic function of $\y\ (p)$, we conclude that the $F_p(x, \y)$ are also jointly independent in $p$.  On the other hand, since all $p \in {\mathcal P}_H$ lie in the interval $\frac{\eps^2}{2} H \leq p \leq \eps^2 H$, we have the deterministic bound $|F_p(x,\y)|\leq C/\eps^2$ for some absolute constant $C$.  Applying the Hoeffding inequality \cite{hoeff}, we conclude that
$$ \P\left( |F(x,\y) - \E F(x,\y)| \geq \eps^2 \frac{H}{\log H} \right) \ll \exp\left( - \frac{2 (\eps^2 \frac{H}{\log H})^2}{(2C/\eps^2)^2 |P_H|} \right).$$
From the prime number theorem we have $|P_H| \ll \eps^2 \frac{H}{\log H}$, and the claim follows (as $\eps$ is small and $H$ is large).
\end{proof}

Combining this lemma with Lemma \ref{weak-unif}, we conclude that for any good $x$, one has
$$
\P\left( \left|F(x,\Y_H) - \frac{1}{P_H} \sum_{y \in \Z/P_H\Z} F(x,y)\right| \geq \eps^2 \frac{H}{\log H}\right ) = o_{H_- \to \infty}( 1 ). $$
By Fubini's theorem, and the fact that $\X_H$ is good with probability $1-o_{H_- \to \infty}(1)$, one thus has
$$ F(\X_H, \Y_H) = \frac{1}{P_H} \sum_{y \in \Z/P_H\Z} F(\X_H,y) + O\left( \eps^2 \frac{H}{\log H} \right) $$
with probability $1-o_{H_- \to \infty}(1)$.  On the other hand, from the triangle inequality, \eqref{faxy}, and the prime number theorem we have
$$ F(x,y) \ll \frac{H}{\log H}.$$
We can thus take expectations and conclude that
$$ \E F(\X_H, \Y_H) = \E \frac{1}{P_H} \sum_{y \in \Z/P_H\Z} F(\X_H,y) + O\left( \eps^2 \frac{H}{\log H} \right),$$
and hence by \eqref{efy-large} we have
\begin{equation}\label{epoh}
\left |\E \frac{1}{P_H} \sum_{y \in \Z/P_H\Z} F(\X_H,y)\right| \gg \eps \frac{H}{\log H}.
\end{equation}
The advantage here is that we have decoupled the $x$ and $y$ variables, and the $y$ average is now easy to compute.  Indeed, from the Chinese remainder theorem and \eqref{fp-def} we see that
$$ \frac{1}{P_H} \sum_{y \in \Z/P_H\Z} F_p(x,y) =
\frac{c_p}{p} \sum_{j: j,j+ph \in [1,H]} 1_{j = pb \ (a)} x_{1,j} x_{2,j+ph} $$
for any $x$ and any $p \in {\mathcal P}$, and on summing in ${\mathcal P}$ and inserting into \eqref{epoh}, we conclude that
$$
\left| \E \sum_{p \in {\mathcal P}_H} \frac{c_p}{p} \sum_{j: j,j+ph \in [1,H]} 1_{j = pb \ (a)} g_{1,\eps^2}(a\n+j) g_{2,\eps^2}(a\n+j+ph)\right | \gg \eps \frac{H}{\log H}.
$$
Since $g_i = g_{i,\eps^2} + O(\eps^2)$ and $g_i,g_{i,\eps^2} = O(1)$ for $i=1,2$, we we can replace $g_{i,\eps^2}$ by $g_i$ on the left-hand side at the cost of an error of $O( \eps^2 \sum_{p \in {\mathcal P}_H} \frac{H}{p} ) = O(\eps^2 \frac{H}{\log H})$.  We thus have
\begin{equation}\label{epha}
\left| \E \sum_{p \in {\mathcal P}_H} \frac{c_p}{p} \sum_{j: j,j+ph \in [1,H]} 1_{j = pb \ (a)} g_{1}(a\n+j) g_{2}(a\n+j+ph)\right | \gg \eps \frac{H}{\log H}.
\end{equation}

On the other hand, by using the Hardy-Littlewood circle method, we can obtain the following deterministic estimate for the expression inside the expectation.

\begin{lemma}[Circle method estimate]  Let $a,H$ be as above (in particular, $H$ is a multiple of $a$). For any $\alpha \in \R/\Z$, let $S_H(\alpha)$ denote the exponential sum
\begin{equation}\label{sth}
S_H(\alpha) := \sum_{p \in {\mathcal P}_H} \frac{c_p}{p} e(\alpha p)
\end{equation}
and let $\Xi_H$ denote the elements $\xi \in \Z/H\Z$ for which
$$
\left|S_H\left(-\frac{(b+h) \eta}{a} - \frac{h \xi}{H}\right)\right| \geq \frac{\eps^2}{\log H}
$$
for some $\eta \in \Z/a\Z$.  For $j=1,\dots,H$, let $x_{1,j},x_{2,j}$ be complex numbers bounded in magnitude by one.  Then
\begin{equation}\label{cap}
\begin{split}
&\sum_{p \in {\mathcal P}_H} \frac{c_p}{p} \sum_{j: j,j+ph \in [1,H]} 1_{j = pb \ (a)} x_{1,j} x_{2,j+ph} \\
&\quad \ll_{a,h} \frac{H}{\log H} \left( \eps^2 + 
\sum_{\xi \in \Xi_H} \frac{1}{H} \left|\sum_{j=1}^H x_{1,j} e( -j \xi/H )\right|\right).
\end{split}
\end{equation}
\end{lemma}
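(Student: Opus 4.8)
The plan is to run the Hardy--Littlewood circle method on the cyclic group $\Z/H\Z$, so that the inner sum over $j$ becomes an honest convolution; the only real labour is the bookkeeping that brings the congruence constraint $1_{j\equiv pb\ (a)}$ and the dilation $j\mapsto j+ph$ into alignment with the additive characters of $\Z/H\Z$, which is where the hypothesis $a\mid H$ enters. First I would pass to a genuinely cyclic sum: extend $x_{2,\cdot}$ to all of $\Z$ by declaring $x_{2,k}=0$ for $k\notin[1,H]$, so that the constraint ``$j,j+ph\in[1,H]$'' in \eqref{cap} may be dropped and $j$ allowed to run over $\{1,\dots,H\}$, which we identify with $\Z/H\Z$ (legitimately, since $x_{1,\cdot}$ is supported on this interval and $a\mid H$ makes the residue $j\ (a)$ a function of $j\ (H)$). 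Then replace the shift $x_{2,j+ph}$ by $x_{2,k}$ with $k\in\{1,\dots,H\}$ the representative of $j+ph\ (H)$. For each $p$ this alters $\sum_{j}1_{j\equiv pb\ (a)}x_{1,j}x_{2,j+ph}$ only at the $O(|h|p)$ indices $j$ near the wrap-around, each by $O(1)$; since every $p\in{\mathcal P}_H$ satisfies $\tfrac{\eps^2}{2}H\le p\le\eps^2 H$, and $|{\mathcal P}_H|\ll \eps^2 H/\log H$ while $\sum_{p\in{\mathcal P}_H}\tfrac1p\ll \tfrac1{\log H}$ by the prime number theorem and Mertens' theorem, the cumulative error (after the weights $c_p/p$ and the sum over $p$) is $O_h(\eps^2 H/\log H)$, absorbed into the $\eps^2\frac{H}{\log H}$ term of \eqref{cap}.

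Next I would Fourier expand. Writing $1_{j\equiv pb\ (a)}=\tfrac1a\sum_{\eta\in\Z/a\Z}e\!\big(\tfrac{\eta(j-pb)}{a}\big)$ and $x_{2,k}=\sum_{\zeta\in\Z/H\Z}\hat x_2(\zeta)e(k\zeta/H)$ with $\hat x_2(\zeta):=\tfrac1H\sum_{k=1}^{H}x_{2,k}e(-k\zeta/H)$, and using $a\mid H$ to rewrite $e(\eta j/a)=e\!\big(j(\eta H/a)/H\big)$, the sum over $j$ collapses to a Fourier coefficient of $x_{1,\cdot}$ and the sum over $p$ collapses into $S_H$ from \eqref{sth}. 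After the change of summation variable $\zeta=-\tfrac{\eta H}{a}-\xi$, chosen precisely so that $\hat x_1$ depends on $\xi$ alone while the argument of $S_H$ takes exactly the form in the definition of $\Xi_H$, the left-hand side of \eqref{cap} becomes, up to the error of the previous step,
\[
\frac{H}{a}\sum_{\eta\in\Z/a\Z}\sum_{\xi\in\Z/H\Z}\hat x_2\!\Big(-\frac{\eta H}{a}-\xi\Big)\,\hat x_1(\xi)\,S_H\!\Big(-\frac{(b+h)\eta}{a}-\frac{h\xi}{H}\Big),
\]
where $\hat x_1(\xi)=\tfrac1H\sum_{j=1}^{H}x_{1,j}e(-j\xi/H)$. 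Getting this identity exactly right --- in particular arranging that the ``clean'' frequency $\xi$ sits simultaneously inside $\hat x_1$ and inside $S_H$ --- is the one slightly delicate point, but it is bookkeeping rather than analysis.

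Finally I would split the $\xi$-sum according to whether $\xi\in\Xi_H$. For $\xi\notin\Xi_H$ one has $|S_H(-\tfrac{(b+h)\eta}{a}-\tfrac{h\xi}{H})|<\eps^2/\log H$ for every $\eta$, so Cauchy--Schwarz in $\xi$ followed by Parseval --- which, since $|x_{i,j}|\le1$, gives $\sum_{\xi}|\hat x_i(\xi)|^2=\tfrac1H\sum_{j}|x_{i,j}|^2\le1$ for $i=1,2$ --- bounds that contribution by $\tfrac Ha\cdot a\cdot\tfrac{\eps^2}{\log H}=\eps^2\frac{H}{\log H}$. For $\xi\in\Xi_H$ I would discard $S_H$ using only the trivial pointwise bound $|S_H(\alpha)|\le\sum_{p\in{\mathcal P}_H}\tfrac1p\ll\tfrac1{\log H}$ together with $|\hat x_2(\cdot)|\le1$, which bounds that contribution by $\tfrac Ha\cdot a\cdot\tfrac1{\log H}\sum_{\xi\in\Xi_H}|\hat x_1(\xi)|\ll\frac{H}{\log H}\sum_{\xi\in\Xi_H}\tfrac1H\big|\sum_{j=1}^{H}x_{1,j}e(-j\xi/H)\big|$. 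Adding the wrap-around error, the $\xi\notin\Xi_H$ contribution and the $\xi\in\Xi_H$ contribution yields \eqref{cap}. I expect no genuine obstacle within this lemma: all the cancellation that will eventually be needed is deferred to a later estimate for $\sum_{\xi\in\Xi_H}\tfrac1H|\sum_j x_{1,j}e(-j\xi/H)|$ via \eqref{hq-a} together with a restriction bound on $|\Xi_H|$, and is no part of the present statement; here the only care required is tracking constants and the precise arguments of $S_H$ through the Fourier manipulation.
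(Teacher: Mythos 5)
Your proof is correct and follows essentially the same route as the paper's: periodic extension with a wrap-around error of $O_h(\eps^2 H/\log H)$, a Fourier expansion that collapses the $j$-sum and the $p$-sum into the bilinear identity $\frac{H}{a}\sum_{\eta}\sum_{\xi}\hat x_2(-\frac{\eta H}{a}-\xi)\hat x_1(\xi)S_H(-\frac{(b+h)\eta}{a}-\frac{h\xi}{H})$, then Cauchy--Schwarz and Plancherel off $\Xi_H$ and trivial bounds on $\Xi_H$. The only cosmetic difference is that you expand the congruence $1_{j\equiv pb\ (a)}$ in additive characters mod $a$ while the paper Fourier-expands both sequences and lets orthogonality force $\xi'=\xi+\frac{H}{a}\eta$; these manipulations yield the identical expression.
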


\begin{proof}
We extend $x_{1,j}$, $x_{2,j}$ periodically with period $H$.  If we remove the constraint that $j+ph \in [1,H]$, we incur an error of
$O( \sum_{p \in {\mathcal P}_H} \frac{1}{p} |ph| ) = O(|h|\eps^2 \frac{H}{\log H})$ which is acceptable.  Thus, viewing $j$ now as an element of $\Z/H\Z$, we may replace the left-hand side of \eqref{cap} by
\begin{equation}\label{G1}
\sum_{p \in {\mathcal P}_H} \frac{c_p}{p} \sum_{j \in \Z/H\Z} 1_{j = pb \ (a)} x_{1,j} x_{2,j+ph}.
\end{equation}

We perform a Fourier expansion
$$ x_{i,j} = \sum_{\xi \in \Z/H\Z} G_i(\xi) e( j \xi / H )$$
for $i=1,2$, where
$$ G_i(\xi) := \frac{1}{H} \sum_{j \in \Z/H\Z} x_{i,j} e(-j \xi / H).$$
We can thus expand \eqref{G1} as
$$ \sum_{\xi, \xi' \in \Z/H\Z} G_1(\xi) G_2(-\xi') \sum_{p \in {\mathcal P}_H} \frac{c_p}{p} \sum_{j \in \Z/H\Z} 1_{j = pb \ (a)} e\left( \frac{j \xi}{H} - \frac{(j+ph) \xi'}{H}\right ).$$
The inner sum vanishes unless $\xi' = \xi + \frac{H}{a} \eta$ for some $\eta \in \Z/a\Z$, in which case one has
$$ \sum_{j \in \Z/H\Z} 1_{j = pb \ (a)} e\left( \frac{j \xi}{H} - \frac{(j+ph) \xi'}{H}\right ) = \frac{H}{a} e\left( -\frac{p(b+h)\eta}{a} - \frac{ph\xi}{H} \right )$$
(recall that $H$ was chosen to be a multiple of $a$),
and thus by \eqref{sth} we can write \eqref{G1} as
$$ \frac{H}{a} \sum_{\eta \in \Z/a\Z} \sum_{\xi \in \Z/H\Z} G_1(\xi) G_2(-\xi- \frac{H}{a} \eta) S_H\left( -\frac{(b+h) \eta}{a} - \frac{h \xi}{H} \right).$$

From the Cauchy-Schwarz inequality followed by the Plancherel identity, one has
$$  \sum_{\xi \in \Z/H\Z} |G_1(\xi)| |G_2(-\xi - \frac{H}{a} \eta)| \ll 1,$$
so those $\xi \not \in \Xi_H$ give an acceptable contribution.  For the remaining $\xi$, we bound $G_2(-\xi - \frac{H}{a} \eta)$ crudely by $O(1)$ and $S_H(-\frac{(b+h) \eta}{a} - \frac{h \xi}{H} ) $ by $O( \frac{1}{\log H})$
and use the triangle inequality to obtain the claim.
\end{proof}

Combining this lemma with \eqref{epha}, we conclude that
$$
\sum_{\xi \in \Xi_H}
\E \frac{1}{H} \left|\sum_{j=1}^H g_1(a\n+j) e( -j \xi/H )\right| \gg_{a,h} \eps.$$
By \eqref{hq-a} we thus have
$$ \eps \ll_{a,h} o_{H_- \to \infty}( |\Xi_H| ).$$

To conclude the desired contradiction, it thus suffices (by taking $H_-$ large enough) to show

\begin{lemma}[Restriction theorem for the primes]  We have $|\Xi_H| \ll_{a,h,\eps} 1$.
\end{lemma}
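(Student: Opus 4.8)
The plan is to combine the restriction theorem for the primes from \cite{gt-selberg} with the elementary principle that an exponential sum with a controlled derivative can be large only on a union of short arcs whose total measure is governed by an $L^q$ bound.

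First I would establish an $L^q$ estimate for $S_H$ for some fixed exponent $q>2$ (say $q=3$). Setting $N:=\eps^2H$ and writing $\frac{c_p}{p}=\frac{c_p}{p\log p}\cdot\log p$, one has $\bigl|\frac{c_p}{p\log p}\bigr|\le\kappa:=\frac{2}{N\log(N/2)}$ for every $p\in{\mathcal P}_H$, so (extending the coefficients by zero outside ${\mathcal P}_H$) the sum \eqref{sth} takes the form $S_H(\alpha)=\kappa\sum_{p\le N}a_p(\log p)\,e(\alpha p)$ with $|a_p|\le 1$. The restriction theorem for the primes \cite{gt-selberg} gives $\int_{\R/\Z}\bigl|\sum_{p\le N}a_p(\log p)e(\alpha p)\bigr|^q\,d\alpha\ll_q N^{q-1}$, whence, using $N=\eps^2H$ and that $H$ is large compared with $\eps$,
$$\int_{\R/\Z}|S_H(\alpha)|^q\,d\alpha\ \ll_q\ \kappa^q N^{q-1}\ \ll_{q,\eps}\ \frac{1}{H(\log H)^q}.$$
The delicate point here — and the one genuinely responsible for the uniformity in $H$ — is that the weight $\tfrac1p$ in $S_H$ supplies an extra factor of size $(\log H)^{-q}$ beyond the trivial normalisation, since $\tfrac1{\log p}\asymp_\eps\tfrac1{\log H}$ uniformly for $p\in{\mathcal P}_H$; it is exactly this gain that will cancel the threshold $\delta^{-q}$ appearing below.

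Next I would convert membership in $\Xi_H$ into occupancy of a short arc on which $|S_H|$ is large. Differentiating \eqref{sth} and using the prime number theorem gives $|S_H'(\alpha)|=2\pi\bigl|\sum_{p\in{\mathcal P}_H}c_p\,e(\alpha p)\bigr|\le 2\pi|{\mathcal P}_H|\ll\eps^2H/\log H$, so there is an absolute constant $c_0>0$ such that $|S_H(\beta)|\ge\delta:=\eps^2/\log H$ forces $|S_H(\alpha)|\ge\delta/2$ throughout the arc $I_\beta:=(\beta-c_0/H,\beta+c_0/H)$. Since every relevant frequency $\beta_{\xi,\eta}:=-\frac{(b+h)\eta}{a}-\frac{h\xi}{H}$ (with $\xi\in\Z/H\Z$, $\eta\in\Z/a\Z$) lies in $\tfrac1{aH}\Z/\Z$, any point of $\R/\Z$ lies in at most $O_a(1)$ of the arcs $I_{\beta_{\xi,\eta}}$. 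Letting $M$ be the number of \emph{distinct} values $\beta=\beta_{\xi,\eta}$ with $|S_H(\beta)|\ge\delta$, the union of the corresponding arcs has measure $\gg_a M/H$ and carries $|S_H|\ge\delta/2$, so the bound of the previous paragraph yields $(\delta/2)^q\,M/H\ll_a\int_{\R/\Z}|S_H|^q\ll_{q,\eps}(H(\log H)^q)^{-1}$; since $\delta/2=\tfrac{\eps^2}{2\log H}$ the powers of $\log H$ cancel and $M\ll_{q,a,\eps}\eps^{-2q}\ll_{a,\eps}1$.

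Finally I would pass from $M$ to $|\Xi_H|$: for each fixed $\eta\in\Z/a\Z$ the map $\xi\mapsto\beta_{\xi,\eta}$ from $\Z/H\Z$ is $\gcd(|h|,H)$-to-one, hence at most $|h|$-to-one, so $\#\{\xi\in\Z/H\Z:|S_H(\beta_{\xi,\eta})|\ge\delta\}\le|h|M$; summing over the $a$ residues $\eta$ gives $|\Xi_H|\le a|h|M\ll_{a,h,\eps}1$. The only non-elementary input is the restriction theorem for the primes; I expect the sole point requiring care to be the normalisation in the first step that extracts the decisive factor $(\log H)^{-q}$, after which everything is routine.
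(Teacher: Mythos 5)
Your argument is correct, and it leans on the same key input as the paper -- the Green--Tao restriction theorem for the primes from \cite{gt-selberg} -- but implements it differently. The paper applies the \emph{discrete} restriction estimate (\cite[Proposition 4.2]{gt-selberg} with exponent $4$, over $\Z/aH\Z$, with coefficients $c_p/(p\beta_R(p))$ and the lower bound $\beta_R(p) \gg \log H$ on ${\mathcal P}_H$ supplying exactly the $(\log H)^{-4}$ gain you isolate), obtaining $\sum_{k \in \Z/aH\Z} |S_H(k/aH)|^4 \ll_{\eps,a} \log^{-4} H$; since every frequency $-\frac{(b+h)\eta}{a}-\frac{h\xi}{H}$ already lies on the grid $\frac{1}{aH}\Z/\Z$, Markov's inequality immediately bounds the number of large grid frequencies, and the passage to $|\Xi_H|$ is the same at-most-$a|h|$-to-one counting you give. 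You instead use a continuous $L^q$ restriction estimate over $\R/\Z$ and then must widen each large value into an arc of length $\asymp 1/H$ via the derivative bound $|S_H'| \ll |{\mathcal P}_H| \ll \eps^2 H/\log H$; note that this step genuinely needs the Chebyshev-strength bound on $|{\mathcal P}_H|$ (the trivial bound $|{\mathcal P}_H| \le \eps^2 H$ would force shorter arcs and lose a factor of $\log H$), which you do invoke, and your normalisation $\frac{c_p}{p} = \kappa\, a_p \log p$ is the continuous analogue of the paper's choice of coefficients against $\beta_R$. The one presentational caveat is that \cite{gt-selberg} states its estimates in terms of the enveloping sieve weight $\beta_R$ rather than the $\log p$-weighted prime exponential sum you quote, so you should either cite the deduction or carry out the same $\beta_R(p) \gg \log p$ comparison the paper does; with that in place your route is complete, just slightly longer than the paper's, whose discrete formulation makes the arc-widening step unnecessary.
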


\begin{proof}  We invoke \cite[Proposition 4.2]{gt-selberg} (with $p=4$, $F(n) := n$, and $N$ replaced by $aH$), which gives the bound
$$ \left( \sum_{b \in \Z/aH\Z} | \frac{1}{aH} \sum_{n=1}^H a_n \beta_R(n) e( - bn/aH ) |^4 \right)^{1/4} \ll \left( \frac{1}{aH} \sum_{n=1}^H |a_n|^2 \beta_R(n) \right)^{1/2}$$
for any sequence $a_n$, where $R := (aH)^{1/10}$ and $\beta_R$ is a certain non-negative weight constructed in \cite[Proposition 3.1]{gt-selberg}, whose only relevant properties here are that $\beta_R(n) \gg \log H$ when $n$ is a prime in ${\mathcal P}_H$.  Setting $a_n$ set equal to $\frac{c_p}{p \beta_R(p)}$ when $n$ is a prime in ${\mathcal P}_H$, and $a_n=0$ otherwise, we conclude that\footnote{As an alternative proof of this estimate, one can use standard Fourier-analytic manipulations to rewrite the left-hand side of \eqref{eft} as $aH \sum_{p_1,p_2,p_3,p_4 \in {\mathcal P}_H:p_1+p_2=p_3+p_4} \frac{c_{p_1} c_{p_2} \overline{c_{p_3}} \overline{c_{p_4}}}{p_1 p_2 p_3 p_4}$, which by the triangle inequality is bounded in magnitude by $O_a( \frac{1}{H^3} \sum_{p_1,p_2,p_3,p_4 \in {\mathcal P}_H: p_1+p_2=p_3+p_4} 1 )$.  The sum may be upper bounded using a standard upper bound sieve for the primes (e.g. the Selberg sieve) to be $O_\eps( H^3 / \log^4 H)$, giving \eqref{eft}.}
\begin{equation}\label{eft}
 \sum_{k \in \Z/aH\Z} \left|S_H\left(\frac{k}{aH}\right)\right|^4 \ll_{\eps,a} \frac{1}{\log^4 H}
\end{equation}
and thus by Markov's inequality we have $|S_H(\frac{k}{aH})| \geq \frac{\eps^2}{\log H}$ for at most $O_{\eps,a}(1)$ values of $k \in \Z/aH\Z$.  The claim follows.
\end{proof}

\begin{remark} In the special case $g_1 = g_2 = \lambda$ (or more generally when $g_2$ is the complex conjugate of $g_1$, we have $c_p=1$, and the exponential sum $S_H(\alpha)$ can then be handled by the Vinogradov estimates for exponential sums over primes (see e.g. \cite[\S 13.5]{ik}).  In that case, one can compute $\Xi_H$ fairly explicitly; it basically consists of those frequencies $\xi$ which are ``major arc'' in the sense that $\xi/H$ is close to a rational $a/q$ of bounded denominator $q$.  As remarked previously, this allows for a slight simplification in the arguments in that the exponential sum estimates in \cite[Lemma 2.2, Theorem 2.3]{mrt} can be replaced with the simpler estimate in \cite[Theorem A.1]{mrt}; also, the quantitative bounds in Theorem \ref{lach} should improve if one uses this approach.  However, for more general choices of $g_1,g_2$, the coefficients $c_p$ are essentially arbitrary unit phases, and the frequency set $\Xi_H$ need not be contained within major arcs.
\end{remark}

\section{Further remarks}\label{remarks}

It is natural to ask if the arguments can be extended to higher point correlations than the $k=2$ case, for instance to bound sums such as the three-point correlation
\begin{equation}\label{3pt}
\sum_{x/\omega < n \leq x} \frac{\lambda(n) \lambda(n+1) \lambda(n+2)}{n}.
\end{equation}
Most of the above arguments carry through to this case.  However, the ``bilinear'' left-hand side of \eqref{G1} will be replaced by a ``trilinear'' expression such as
$$
\left | \sum_{p \in {\mathcal P}_H} \frac{1}{p} \sum_{j \in \Z/H\Z} x_{1,j} x_{2,j+p} x_{3,j+2p} \right |.$$
These sorts of sums have been studied in the ergodic theory literature \cite{fhk}, \cite{wz}.  Roughly speaking, the analysis there shows that these sums are small unless one has a large Fourier coefficient $G_1(\xi)$ for some $\xi \in \Z/H\Z$.  However, in contrast to the previous argument in which $\xi$ was restricted to a small set $\Xi_H$ (which, crucially, was independent of $\n$), one now has no control whatsoever on the location of $\xi$.  As such, one would now need to control maximal averaged exponential sums such as
\begin{equation}\label{xnil}
 \frac{1}{X} \int_X^{2X} \sup_\alpha \left|\frac{1}{H} \sum_{x \leq n \leq x+H} \lambda(n) e(\alpha n)\right|\ dx,
\end{equation}
which (as pointed out in \cite{mrt}) are not currently covered by the existing literature (note carefully that the supremum in $\alpha$ is \emph{inside} the integral over $x$).  However, this appears to be the only significant obstacle to extending the results of this paper to the $k=3$ case, and so it would certainly be of interest to obtain non-trivial estimates on \eqref{xnil}.   Note however that if one replaces $\lambda(n)$ with $n^{it}$, then the expression \eqref{xnil} exhibits essentially no cancellation for $t$ almost as large as $X^2$ (as opposed to the condition $t=O(X)$ that naturally appears in the $k=2$ analysis).  Similarly for the variant
$$
\sum_{x/\omega < n \leq x} \frac{n^{it} (n+1)^{-2it} (n+2)^{it}}{n} $$
of \eqref{3pt}.  This suggests that in order to establish cancellation in \eqref{3pt} and \eqref{xnil}, one must somehow go beyond the techniques in \cite{mr}, \cite{mrt}, as these techniques do not exclude the problematic multiplicative functions $n \mapsto n^{it}$ for $t$ between $x$ and $x^2$.

For even higher values of $k$, one has to now control quartilinear and higher expressions in place of \eqref{G1}.  Using the literature from higher order Fourier analysis (in particular the inverse theorem in \cite{gtz}, together with transference arguments from \cite{fhk}, \cite{gt-primes}, or \cite{wz}), one is now faced with the task of controlling sums even more complicated than \eqref{xnil}, in which the linear phases $n \mapsto e(\alpha n)$ are now replaced by more general nilsequences of higher step (which one then has to take the supremum over, before performing the integral); this task can be viewed as a local version of the machinery in \cite{fhost}, \cite{FH}, and will be carried out in detail in \cite{tao-detail}.  Of course, since satisfactory control on \eqref{xnil} is not yet available (even if one inserts logarithmic averaging), it is not feasible at present to control higher step analogues of \eqref{xnil} either.  However, one can hope that if a technique is found to give good bounds on \eqref{xnil}, it could also extend (in principle at least) to higher step sums.

It is of course of interest to remove the logarithmic averaging from Theorem \ref{lach} or Theorem \ref{elliott}.  It appears difficult to do this while utilising the entropy decrement argument, because this argument involves a scale $H$ which cannot be specified in advance, but is produced through a variant of the pigeonhole principle.  However, it may be possible to estimate expressions such as \eqref{pnj} for a specified $H$ without resorting to the entropy decrement argument, by establishing some sort of expander graph property for the random graph $G_{n,H}$ (or some closely related graph) from the introduction, and then there would be some chance of removing the logarithmic averaging.  Unfortunately we were unable to establish such an expansion property, as the edges in the graph $G_{n,H}$ do not seem to be either random enough or structured enough for standard methods of establishing expansion to work.

\end{document}